\documentclass[12pt,a4paper]{article}
\usepackage{natbib}
\usepackage{float}
\usepackage{multirow,bbm}
\usepackage{amssymb,amsmath, amsfonts, booktabs}
\usepackage{lscape}
\usepackage{enumerate}
\usepackage{amsthm}
\usepackage[T1]{fontenc}
\usepackage[utf8]{inputenc}
\usepackage{subcaption}
\captionsetup[subfigure]{labelformat=empty}
\usepackage{lmodern}
\usepackage{placeins}
\usepackage[english]{babel}
\usepackage{pdflscape}
\usepackage{csquotes}
\usepackage[colorlinks, citecolor=blue]{hyperref}
\usepackage{mathtools}
\usepackage{xcolor}
\numberwithin{equation}{section}
\setlength{\textheight}{9.6in}
\setlength{\textwidth}{6.8in} \setlength{\oddsidemargin}{.2in}
\setlength{\evensidemargin}{.5in}

\voffset-30pt \hoffset-30pt \headsep .5in
\setlength{\topmargin}{-0.05in} \setlength{\parskip}{1.5ex}
\setlength{\parindent}{0.3in}

\usepackage{graphicx}
\usepackage{caption}
\usepackage{subcaption}
\usepackage{upgreek}
\bibpunct[, ]{(}{)}{;}{a}{,}{,}
\newtheorem{theorem}{Theorem}[section]
\newtheorem{exe}{Example}[section]

\bibliographystyle{abbrvnat}
\setcitestyle{authoryear,open={(},close={)}} 
\begin{document}
	
	\begin{center}
		{\Large\bf On the Study of (Dynamic) Weighted Fractional Generalized Cumulative Residual Inaccuracy  with Applications} \\
		\vspace{0.3in}
		
		{\bf Aman Pandey~~~~ \bf Chanchal Kundu\footnote{Corresponding author e-mail: ckundu@rgipt.ac.in, chanchal\_kundu@yahoo.com.}}
		\\
		Department of Mathematical Sciences\\ Rajiv Gandhi Institute of Petroleum Technology\\Jais  229304, U.P., India\vspace{0.1in} \\{February, 2026}\\
	\end{center}
	\begin{abstract}
Recently, there has been growing interest in developing information-theoretic measures that capture uncertainty and inaccuracy. In this work, we introduce a novel measure, referred to as the weighted fractional generalized cumulative residual inaccuracy (WFGCRI), as an extension of the weighted cumulative residual inaccuracy (WCRI) to capture complex tail behavior and structural discrepancies between the true and assumed distributions. We derive its fundamental analytical properties and establish several bounds that highlight its key relationships. Further, we consider a dynamic version of WFGCRI, and its temporal evolution is examined under the proportional hazard rate and proportional odds models. An empirical estimation procedure for WFGCRI is developed, and its finite-sample performance is systematically assessed through Monte Carlo simulation studies. Moreover, the ability of WFGCRI to detect chaotic dynamics is demonstrated using the Ricker and Tent maps. The practical relevance of WFGCRI is further established through an analysis of a real dataset for uncertainty assessment. Additionally, we demonstrate that WFGCRI is superior to the traditional WCRI when applied to stock market data. 
	\end{abstract}
	\vskip 2mm
	
	\noindent \textbf{Keywords:} Fractional generalized cumulative residual entropy; Inaccuracy;  Non-parametric\\ estimation; Proportional hazard rate model;  Stochastic orders.\\
	\noindent \textbf{Mathematics Subject Classification (2020):} Primary: 62B10; Secondary: 60E15. \\
	\noindent------------------------------------------------------------------------------------------------------------------------------
	\pagestyle{myheadings}
	
	\thispagestyle{empty}
	\section{Introduction}
Shannon entropy, introduced by \cite{Sh1948}, quantifies the uncertainty in a random variable (RV) and serves as a foundational concept in information theory. Let $X$ be a non-negative absolutely continuous RV with probability density function (pdf)
$f_X$. The  differential entropy of $X$ is defined as:
\begin{align}
H(X) = -\int_0^\infty f_X(w)\ln f_X(w)\,dw.
\end{align}
The entropy measures play a central role in data compression, communication theory, and statistical learning \citep{Mac2003}. Beyond engineering, Shannon entropy has found applications across various fields, from physics to neuroscience, underscoring its versatility in quantifying uncertainty and information. 

\cite{Rao2004} introduced an alternative measure of entropy called cumulative residual entropy (CRE) which is defined for the RV $X$ as:
 \begin{align}\label{eq:1.2}
\mathcal {H}(X)=-\int_0^{\infty}S_X (w)\ln {S_X}(w)dw,
 \end{align}
where $S_X=1-F_X$ is the survival function ({sf}) with $F_X$ as the cumulative distribution function (cdf) of $X$. Further, they studied the CRE and  established several theoretical properties, highlighting its advantages over Shannon entropy. The paper also demonstrated CRE's potential through applications in reliability engineering and computer vision.

While Shannon entropy is fundamentally additive in nature, such additivity may be restrictive in scenarios characterized by nonextensivity, long-range interactions, or multi-fractal structures. To overcome these limitations, various generalized entropy frameworks have been proposed, most notably the one-parameter families of \cite{Tsa1988} and \cite{Ren1961} entropies. Within this line of inquiry, \cite{Ubri2009} introduced an alternative information measure, termed as fractional differential entropy, given by
\begin{align}\label{eq:1.3}
 H_\beta(X) = \int_0^\infty f_X(w)\left(-\ln f_X(w)\right)^\beta\,dw, \quad 0 \leq \beta \leq 1.
\end{align}
This formulation preserves essential properties such as positivity and concavity, yet it deviates from additivity in general settings. Importantly, the measure coincides with Shannon entropy for $\beta=1$. Fractional differential entropy incorporates a nonlinear transformation of probabilistic weights, thereby providing enhanced flexibility in representing complex forms of uncertainty. 

Following CRE \eqref{eq:1.2} and \eqref{eq:1.3}, \cite{Xio2019} proposed the notion of  fractional cumulative residual entropy (FCRE) for the  RV $X$, defined as:
\begin{align}\label{1.4}
 {\mathcal H}_\beta(X) = \int_0^{\infty} S_X(w) \left(-\ln S_X(w)\right)^\beta \, dw, \quad 0 \leq \beta \leq 1,
\end{align}
which generalizes the CRE by incorporating a fractional power of the information function.  The authors derived theoretical properties, bounds and, a consistent empirical estimator. 

\cite{Mach2014} further extended Ubriaco entropy \eqref{eq:1.3} and demonstrated that fractional entropy offers heightened sensitivity to signal dynamics, enabling richer characterization of system evolution compared to traditional single-parameter entropy measures. Building upon this idea, \cite{Di2021}  proposed an extension of FCRE \eqref{1.4} to achieve a better correspondence with other useful information measures by introducing an additional term. This led to the definition of the fractional generalized cumulative residual entropy (FGCRE), which provides a more flexible and comprehensive framework for quantifying uncertainty, given as:
\begin{align}\label{eq:5}
	\tilde{\mathcal H}_\beta(X)=\frac{1}{\Gamma(\beta+1)}\int_0^\infty S_X(w)\left(-\ln S_X(w)\right)^\beta \, dw, \quad  \beta \geq 0.
\end{align} 
They further extended the measure to its cumulative form by replacing the sf with the cdf.

The FGCRE measure (\ref{eq:5}) is invariant under location shifts. That is, for any constant $a$,
the FGCRE remains unchanged when RV $X$ is shifted by $a$. The property of invariance is the fact that the FGCRE depends solely on the sf $S_X$, and not on the particular values of the RV. While this property is mathematically pleasing, it
might be restrictive in many actual applications, e.g., reliability engineering or computational
neuroscience, where the actual magnitudes of the variables matter. To eliminate this limitation,  \cite{alm2023} introduced the weighted version of FGCRE (WFGCRE). The WFGCRE is expressed as:
\begin{align}\label{eq::1.6}
	 \tilde{\mathcal H}_\beta(X;{\psi})= \frac{1}{\Gamma(\beta+1)} \int_0^{\infty}\psi(w) S_X(w) \left(-\ln S_X(w)\right)^\beta \, dw, \quad \beta \geq0.
\end{align}
    
\cite{Ker1961} introduced the concept of an inaccuracy measure, referred to as the Kerridge inaccuracy, to quantify the discrepancy between the true distribution and a hypothesized distribution. This measure is defined as:
\begin{align}
	\mathcal{K}(X, Y) = -\int_0^\infty f_X(w) \ln f_Y(w)\,dw,
\end{align}
where $f_X$ and $f_Y$ represent the pdfs of  true (X) and assumed distributions (Y), respectively. It is especially relevant in statistical inference and reliability analysis when the true model is not precisely known. The inaccuracy measure helps quantify how closely a proposed model $f_Y$ aligns with the actual data-generating process $f_X$.
Metrics like Kerridge’s inaccuracy and Shannon entropy are essential tools for assessing model performance. Their theoretical foundations and applications have been extensively explored in areas such as information and coding theory, as seen in the works of \cite{Nat1968} and \cite{Bhat1995}. More recent developments in the study of inaccuracy measures can be found in the literature by \cite{Kun2015}, \cite{Kunb2016} and \cite{Kay2017}. In line with CRE \eqref{eq:1.2}, \cite{Kum2015} proposed a survival-based  inaccuracy measure, namely, cumulative residual inaccuracy (CRI), expressed as:
\begin{align}\label{eq:1.8}
	\mathcal K(X, Y) = -\int_0^{\infty} S_X(w)\ln S_Y(w)\,dw,
\end{align}
where $S_X$ and $S_Y$ denote the sfs of  actual and assumed models, respectively, offering a tail-focused perspective on model discrepancy.

\cite{Dan2019} introduced the weighted version of cumulative residual inaccuracy (WCRI), defined by
\begin{align}\label{1.8}
    \mathcal{K}^w(X, Y) = -\int_{0}^{\infty}w S_X(w)\ln S_Y(w)\,dw.
\end{align}
They subsequently extended this framework to record values and developed the corresponding WCRI. In their study, they established several fundamental properties and characterization results, including links with classical reliability measures, bounds, stochastic ordering relations, and the behavior of the measure under linear transformations. Dynamic versions of the measure were also formulated, and an empirical estimation approach based on lower record values was proposed and analyzed.

The WCRI  is an important measure for quantifying the discrepancy between a true lifetime model and an assumed model by incorporating cumulative information and weighting effects. However, WCRI may not adequately capture complex tail behavior and structural discrepancies between true and assumed RVs, particularly for heterogeneous and non-standard lifetime distributions. Motivated by this limitation, we propose a generalized measure, called the  weighted  fractional generalized cumulative residual inaccuracy (WFGCRI), which extends WCRI by providing a more comprehensive assessment of discrepancy between true and assumed models through their sfs. The WFGCRI is defined as:
\begin{align}\label{eq1.9}
	\mathcal K_\beta(X,Y;\psi)=\frac{1}{\Gamma(\beta+1)}\int_0^\infty \psi(w)S_X(w)(-\ln S_Y(w))^\beta\,dw,~ ~\beta\geq0.
\end{align}
It is worth noting that for identical RVs \(X\) and \(Y\), it reduces to the WFGCRE (\ref{eq::1.6}). If further \(\psi(w)=1\), it simplifies to FGCRE \eqref{eq:5}. Also for \(\beta=1\), \eqref{eq1.9} coincides with WCRI \eqref{1.8} with weight function $\psi(w)$.

Upon neglecting the weight factor, the proposed WFGCRI measure can be compared with the fractional cumulative residual inaccuracy (FCRI) introduced by \cite{Kharb2024}, which is defined as:
\begin{align}\label{eq1.11}
	\tilde{\mathcal{K}}_\beta(X,Y)=\int_0^\infty S_X(w)\big(-\ln S_Y(w)\big)^\beta\,dw, ~~\beta\geq 0.
\end{align}
The FCRI is an extension of the FCRE (\ref{1.4}). However, in  equation (\ref{eq1.11}), the parameter space of $\beta$ was incorrectly taken as $\beta \geq 0$, whereas the appropriate range should be $0 \leq \beta \leq 1$ to ensure consistency with the fractional framework. To address this issue, an appropriate corrected definition of the FCRI, which we refer to as the fractional generalized cumulative residual inaccuracy (FGCRI), defined by
\begin{align}\label{eq:1.12}
	\mathcal{K}_\beta(X,Y)=\frac{1}{\Gamma(\beta+1)}\int_0^\infty S_X(w)\big(-\ln S_Y(w)\big)^\beta\,dw, \quad \beta \geq 0.
\end{align}
Several studies have also been devoted to weighted versions of entropy and inaccuracy measures. Important contributions in this direction include \cite{Kumar2010}, \cite{Mis2011}, \cite{Kundu2014}, \cite{Bara2016}, \cite{Fak2016}, \cite{Kundu2017}, \cite{Tahm2020} and \cite{Kay2023}, which demonstrate the relevance of incorporating weight functions in the analysis of uncertainty and information measures. Our proposed measure (WFGCRI) \eqref{eq1.9} can also be considered as the weighted version of \eqref{eq:1.12}. It also extends the notion of WFGCRE \eqref{eq:5} to the case when an appropriate model is assumed in the absence of a true model. With these motivations, in this manuscript we explore the proposed WFGCRI for its properties and applications.

The rest of the paper is organized as follows. Section 2 investigates the analytical bounds of WFGCRI and examines its properties under various stochastic orders and the mixture hazard model.
Section 3 proposes a dynamical version of the WFGCRI and studies its properties under the proportional hazard rate and proportional odds models. We also develop a nonparametric estimation procedure and assess its performance through Monte Carlo simulations in Section 4.
In Section 5, we examine the behavior of WFGCRI under the proportional hazard rate model using the Ricker and Tent maps and demonstrate its usefulness as a measure of instability in time series data. Furthermore, we show that WFGCRI outperforms WCRI through an application to a real-world dataset. Finally, Section 6 concludes the paper by summarizing the main results and contributions.
	\section{Weighted Fractional Generalized Cumulative Residual Inaccuracy Measure}
	In this section, we study the WFGCRI measure and investigate its fundamental theoretical properties. We derive several analytical properties and establish meaningful upper and lower bounds that provide insight into the structural behavior of WFGCRI. Furthermore, we analyze the measure under different stochastic orderings, which helps to understand its monotonicity and comparative behavior across RVs.
	
In the following, we establish two upper bounds for the WFGCRI.
	\begin{theorem}
		For two RVs $X$ and $Y$ with sfs $S_X$ and $S_Y$, respectively. If $\mathcal K_\beta(X,Y;\psi)$ is finite, then
		\begin{enumerate}
			\item[(i)] $
			\mathcal K_\beta(X, Y;\psi) \geq \frac{1}{\Gamma(\beta+1)}\int_0^{\infty} \psi(w) S_X(w)(F_Y(w))^\beta dw
			$;
			\item[(ii)] $\mathcal K_\beta(X,Y;\psi)\geq \mathrm{e}^{\delta(X)+\frac{1}{\Gamma(\beta+1)} H(X)},$ where $\delta(X)=\mathrm{e}^{\frac{1}{\Gamma(\beta+1)}\int_0^{\infty} f_X(w)\ln\left[\psi(w)S_X(w)(-\ln(S_Y(w)))^\beta\right]dw}$.
		\end{enumerate}
	\end{theorem}
	\begin{proof}
		First part of the proof follows from a well known-inequality $\ln w\leq 1-w,~ \forall w\leq 1$. For the second part, applying log-sum inequality, we obtain
		\begin{align*}
		\frac{1}{\Gamma(\beta+1)}	\int_0^{\infty}& f_X(w)\ln\left[\frac{f_X(w)}{\psi(w)S_X(w)\left(-\ln(S_Y(w))^\beta\right)}\right]dw\geq \\	&\frac{1}{\Gamma(\beta+1)}\ln\left[\
			\frac{1}{\int_0^{\infty} \psi(w)S_X(w)\left(-\ln(S_Y(w))\right)^\beta dw}\right].
		\end{align*}
		Equivalently, 
		\begin{align*}
	\frac{1}{\Gamma(\beta+1)}&	\ln\left[\int_0^{\infty} \psi(w)S_X(w)\left(-\ln(S_Y(w))\right)^\beta dw\right]	\geq\\&\frac{1}{\Gamma(\beta+1)} {\int_0^{\infty} f_X(w)\ln\left[\psi(w)S_X(w)(-\ln(S_Y(w)))^\beta\right]dw}+\frac{1}{\Gamma(\beta+1)}{H(X)}.
		\end{align*}
		This  concludes the proof.
	\end{proof}
    Stochastic orders play a fundamental role in probability and statistics, offering a powerful framework for comparing RVs and stochastic processes across numerous disciplines. They provide robust and interpretable tools that support improved decision-making and effective risk assessment. Let $X$ and $Y$ be RVs with sfs $S_X$ and $S_Y$, respectively. 
\begin{enumerate}
	\item \textbf{Usual stochastic order.}  
	We say that $X$ is smaller than $Y$ in the usual stochastic order, denoted by $X \leq_{st} Y$, if
	$
	S_X(w) \leq S_Y(w), \quad \forall\, w \in \mathbb{R}.
	$
	
	\item \textbf{Hazard rate order.}  
	We say that $X$ is smaller than $Y$ in the hazard rate order, denoted by $X \leq_{hr} Y$, if
	$
	\frac{S_X(w)}{S_X(s)} \leq \frac{S_Y(w)}{S_Y(s)}, 
	\quad \forall\, w \leq s
	$.
\end{enumerate}
 In the following section, we derive bounds for the proposed inaccuracy measure by employing the usual stochastic order.
		\begin{theorem}
		Let $X$ and $Y$  be two non-negative RVs with sfs $S_X$ and ${S}_Y,$  respectively.  For $\beta>0$,
		\begin{enumerate}
			\item[(i)] if $X\leq_{st}Y$, then $ \mathcal K_\beta(X,Y;\psi)\leq 	\min\left\{  \tilde{\mathcal H}_\beta(X;\psi),~  \tilde{\mathcal H}_\beta(Y;\psi)\right\};$
		\item[(ii)] if $X\geq_{st}Y$, then $ \mathcal K_\beta(X,Y;\psi)\geq 	\max\left\{ \tilde{\mathcal H}_\beta(X;\psi),~ \tilde{\mathcal H}_\beta(Y;\psi)\right\},$
		\end{enumerate}
		where $\mathcal H_\beta(X;\psi)$ is defined in (\ref{eq::1.6}).
	\end{theorem}
\begin{proof}
	(i) Suppose that $X \leq_{st} Y$. Then, it follows that
	\begin{align}\label{2.2}
		\mathcal{K}_\beta(X,Y;\psi) &\leq \frac{1}{\Gamma(\beta+1)}\int_0^{\infty} \psi(w) S_Y(w) \left( -\ln S_Y(w) \right)^\beta dw =  \tilde{\mathcal H}_\beta(Y;\psi).
	\end{align}	
	Moreover, since $S_X(w) \leq S_Y(w)$, we also have $-\ln S_X(w) \geq -\ln S_Y(w)$, and thus
	$$
	\left( -\ln S_X(w) \right)^\beta \geq \left( -\ln S_Y(w) \right)^\beta \quad \text{for } \beta > 0.
	$$
	Using this, we again bound the integrand in another direction:
	\begin{align}\label{2.3}
		\mathcal{K}_\beta(X,Y;\psi) &=\frac{1}{\Gamma(\beta+1)} \int_0^{\infty} \psi(w) S_X(w) \left( -\ln\nonumber S_Y(w)\nonumber \right)^\beta dw \\
		&\leq\frac{1}{\Gamma(\beta+1)} \int_0^{\infty} \psi(w) S_X(w) \left( -\ln S_X(w) \right)^\beta dw = \tilde{\mathcal H}_\beta(X;\psi).
	\end{align}
	Combining inequalities (\ref{2.2}) and (\ref{2.3}), we obtain the desired result. The second part of the proof follows analogously and is omitted for brevity.
\end{proof}
	\begin{theorem}
		Let $X$, $Y$ and $Z$ be three RVs with sfs $S_X, S_Y$ and $S_Z$, respectively.  Then 
		\begin{enumerate}
			\item[(i)] $\mathcal K_\beta(Z,X;\psi)\geq 	\mathcal K_\beta(Z,Y;\psi)~\text{if}~X\leq_{st}Y.$
			\item[(ii)] $\mathcal K_\beta(X,Y;\psi)\geq 	K_\beta(X,Z;\psi)~\text{if}~Y\leq_{st}Z.$
		\end{enumerate}
	\end{theorem}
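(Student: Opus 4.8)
The plan is to derive both inequalities directly from the pointwise monotonicity of the integrand defining the WFCRI, exploiting the fact that the usual stochastic order is equivalent to an ordering of the survival functions. Recall that $X \leq_{st} Y$ means precisely $S_X(w) \leq S_Y(w)$ for every $w \geq 0$, and that every survival function takes values in $[0,1]$, so that $-\log S_X(w)$ and $-\log S_Y(w)$ are nonnegative wherever they are defined.

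For the first part I would start from $X \leq_{st} Y$, i.e.\ $S_X(w) \leq S_Y(w)$. Since $\log$ is increasing this gives $-\log S_X(w) \geq -\log S_Y(w) \geq 0$, and because $t \mapsto t^{\beta}$ is increasing on $[0,\infty)$ for $\beta > 0$ we obtain the pointwise bound $\bigl(-\log S_X(w)\bigr)^{\beta} \geq \bigl(-\log S_Y(w)\bigr)^{\beta}$. Multiplying by the nonnegative factor $\psi(w) H(w)$ (the weight attached to the first argument $Z$, with $H = S_Z$) preserves this inequality, and integrating over $(0,\infty)$ yields
\begin{align*}
\mathcal K^{\psi(w)}_\beta(Z,X) &= \int_0^{+\infty} \psi(w) H(w) \bigl(-\log S_X(w)\bigr)^{\beta}\, dw \\
&\geq \int_0^{+\infty} \psi(w) H(w) \bigl(-\log S_Y(w)\bigr)^{\beta}\, dw = \mathcal K^{\psi(w)}_\beta(Z,Y),
\end{align*}
which is the claimed inequality.

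The second part is entirely analogous, the only change being which survival function carries the $-\log$. From $Y \leq_{st} Z$, i.e.\ $S_Y(w) \leq S_Z(w)$, the same chain of monotonicity steps gives $\bigl(-\log S_Y(w)\bigr)^{\beta} \geq \bigl(-\log S_Z(w)\bigr)^{\beta}$; multiplying now by the nonnegative weight $\psi(w) S_X(w)$ and integrating produces $\mathcal K^{\psi(w)}_\beta(X,Y) \geq \mathcal K^{\psi(w)}_\beta(X,Z)$.

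I do not expect any genuine technical obstacle; the argument is a routine monotonicity-under-integration estimate, structurally the same as the one already used in the proof of Theorem 2.2. The single point that demands care is the sign reversal introduced by $-\log$: the ordering of the survival functions is reversed upon taking $-\log$, so one must verify that the inequality is restored in the intended direction only after raising to the power $\beta$ and multiplying by the nonnegative weights $\psi(w)$ and the first-argument survival function. The assumed finiteness of the WFCRI ensures that all integrals are well defined, so that the pointwise bound is faithfully transmitted through integration.
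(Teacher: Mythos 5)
Your argument is correct and is essentially identical to the paper's own proof: both derive the pointwise inequality $\psi(w)S_Z(w)(-\log S_X(w))^{\beta} \geq \psi(w)S_Z(w)(-\log S_Y(w))^{\beta}$ from $S_X(w)\leq S_Y(w)$ and then integrate. If anything, your write-up is cleaner, since you make explicit the nonnegativity of $-\log S$ and the monotonicity of $t\mapsto t^{\beta}$, and you avoid the paper's redundant instruction to multiply by $\psi(w)$ a second time.
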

	\begin{proof}
(i).	If $X\leq_{st}Y$, then $S_X(w)\leq S_Y(w)$ for all $x$. Equivalently,
	\begin{align*}
S_Z(w)\left(-\ln S_X(w)\right)^\beta\geq S_Z(w)\left(-\ln S_Y(w)\right)^\beta.
	\end{align*}
	Now multiplying both sides by $\psi(w)$ and integrating both sides we obtain the desired result. A similar argument follows for the second part.
	\end{proof}
	\begin{theorem}
		Let $X,Y$ and $Z$ be three non-negative RVs so that $X\leq_{st} Y\leq_{st} Z$, then $\mathcal K_\beta(X,Y;\psi)+\mathcal K_\beta(Y,Z;\psi)\geq 2	\mathcal K_\beta(X,Z;\psi).$
	\end{theorem}
	\begin{proof}
		The proof directly follows from the definition of  WFGCRI measure.
	\end{proof}
	Next, we analyze the behavior of the WFGCRI under monotone transformations.
    \begin{theorem}
Let $X,Y$ be nonnegative absolutely continuous RVs with sfs $S_X, S_Y$, respectively.
Let $\varphi:[0,\infty)\to[0,\infty)$ be a continuously differentiable and 
strictly increasing function, and define $U=\varphi(X)$ and $V=\varphi(Y)$. Then 
\begin{align*}
	\mathcal K_\beta(U,V;\psi)&=\begin{cases}
		\int_{0}^{\infty}\psi(\varphi(w))\,S_X(w)\,
		\big(-\ln  S_Y(w)\big)^{\beta}\varphi'(w)\,dw,~~\text{if}~~\varphi~\text{is strictly increasing}\\~~~~~~~~~~~~~~~~~~~~~~~~~~~~~~~~~~~~~~~~~~~~\text{with}~\varphi^{-1}(0)=0~\text{and}~ \varphi^{-1}(\infty)=\infty;\\
			\int_{0}^{\infty}\psi(\varphi(w))\, F_X(w)\,
		\big(-\ln  F_Y(w)\big)^{\beta}|\varphi'(w)|\,dw,~~\text{if}~~\varphi~\text{is strictly decreasing}\\~~~~~~~~~~~~~~~~~~~~~~~~~~~~~~~~~~~~~~~~~~~~\text{with}~\varphi^{-1}(0)=\infty~\text{and}~ \varphi^{-1}(\infty)=0.\\
	\end{cases}
\end{align*}
\end{theorem}
\begin{proof}
Suppose that $\varphi$ is a strictly increasing function  with its  inverse $\varphi^{-1}$. For any $u\ge0$,
$S_U(u)=P(\varphi(X)>u)
= P\!\big(X>\varphi^{-1}(u)\big)
= S_X(\varphi^{-1}(u)),$
and similarly $ S_V(v)= S_Y(\varphi^{-1}(v))$.
From (\ref{eq1.9}), we have
\begin{align*}
\mathcal K_\beta(U,V;\psi)
&=\frac{1}{\Gamma(\beta+1)}\int_0^\infty 
\psi(u)\,S_U(u)\,
\big(-\ln S_V(u)\big)^{\beta}\,du \\ &=\frac{1}{\Gamma(\beta+1)}\int_0^\infty 
\psi(u)\,S_X(\varphi^{-1}(u))\,
\big(-\ln S_Y(\varphi^{-1}(u))\big)^{\beta}\,du \\
&=\frac{1}{\Gamma(\beta+1)} \int_0^\infty 
\psi(\varphi(w))\,S_X(w)\,
\big(-\ln S_Y(w)\big)^{\beta}\,
\varphi'(w)\,dw .
\end{align*} The last equality is obtained using $u=\varphi(w)$. The result can be established for the strictly decreasing case by an analogous argument.
\end{proof}
Next, we investigate the effect of affine transformations on the WFGCRI. 
	\begin{theorem}
		Suppose $X_1,X_2,Y_1$ and $Y_2$ are RVs with $Y_1=aX_1+b$ and $Y_2=aX_2+b$,~$a>0,~b\geq 0$. Then, for $\beta>0$, we have 
		\begin{align*}
			\mathcal K_\beta(Y_1,Y_2;\psi)=\alpha 	\mathcal K_\beta(X_1,X_2;\overline{\psi}),
		\end{align*}
		where $\overline \psi(w)=\psi(aw+b)$.
	\end{theorem}
	\begin{proof}
The sfs of $Y_1$ and $Y_2$ are $S_{Y_1} (w) = S_{X_1} \left(\frac{w-b}{a}\right)$ and $S_{Y_2} (w) = S_{X_2} \left(\frac{w-b}{a}\right)$, respectively. Now, from the definition of WFGCRI measure, we have
\begin{align*}
		\mathcal K_\beta(Y_1,Y_2;\psi)&=\frac{1}{\Gamma(\beta+1)}\int_0^{\infty} \psi(w)S_{Y_1}(w)\left(-\ln S_{Y_2}(w)\right)^\beta dw\\
		&=\frac{1}{\Gamma(\beta+1)}\int_b^{\infty} \psi\left(w\right)S_{X_1}\left(\frac{w-b}{a}\right)\left(-\ln S_{X_2}\left(\frac{w-b}{a}\right)\right)^\beta dw\\
		&=\frac{a}{\Gamma(\beta+1)}\int_0^{\infty} \psi(aw+b)S_{X_1}(w)\left(-\ln S_{X_2}(w)\right)^\beta dw.
\end{align*}
This establishes the proof.
	\end{proof}
In the following, we derive upper and lower bounds for WFGCRI in terms of CRI \eqref{eq:1.8}.
	\begin{theorem}
		If $X$ and $Y$ are two non-negative RVs with common support $(a,b)$, such that $0<a<b<\infty$ and $\mathcal K_\beta(X,Y)<\infty$, then 
			\begin{enumerate}
				\item [(i)]		$	\mathcal K_\beta(X, Y;\psi)\geq \inf_{w\in(a,b)}\psi(w)\frac{[\mathcal    {K}(X,Y)]^\beta}{\Gamma{(\beta+1)}(b-a)^{\beta-1}},~~\beta\geq 1;$
				\item[(ii)]	$\mathcal K_\beta(X, Y;\psi)\leq \sup_{w\in(a,b)}\psi(w)\frac{[\mathcal K(X,Y)]^\beta}{\Gamma(\beta+1)(b-a)^{\beta-1}},~~0<\beta\leq 1.$
				\end{enumerate}
	\end{theorem}	
	\begin{proof}
	(i)	For $\beta \ge 1$, $(S_X(w))^\beta \le S_X(w),$
		which implies
		\[
		\psi(w)S_X(w)\big(-\ln S_Y(w)\big)^\beta 
		\ge \psi(w)(S_X(w))^\beta\big(-\ln S_Y(w)\big)^\beta.
		\]
		Multiplying both the sides by $1/\Gamma(\beta+1)$ and integrating over $(a,b)$, we obtain
		\[
		\mathcal{K}_\beta(X,Y;\psi)
		\ge\frac{1}{\Gamma(\beta+1)} \int_a^b \psi(w)(S_X(w))^\beta\big(-\ln S_Y(w)\big)^\beta\,dw.
		\]
		Since $\psi(w)\ge \inf_{w\in(a,b)}\psi(w)$, it follows that
		\[
		\mathcal{K}_\beta(X,Y;\psi)
		\ge \inf_{w\in(a,b)}\frac{1}{\Gamma(\beta+1)}\psi(w)\int_a^b (S_X(w))^\beta\big(-\ln S_Y(w)\big)^\beta\,dw.
		\]
		Now define $\eta(w)=-S_X(w)\ln S_Y(w)\ge 0$ and $\phi_\beta(z)=z^\beta$.  
		For $\beta>1$, the function $\phi_\beta$ is convex on $[0,\infty)$.  
		By Jensen's inequality,
		\[
		\int_a^b \eta(w)^\beta\,dw
		\ge (b-a)\,\phi_\beta\!\left(\frac{1}{b-a}\int_a^b \eta(w)\,dw\right)
		= \frac{1}{(b-a)^{\beta-1}}\left(\int_a^b \eta(w)\,dw\right)^\beta.
		\]
		Therefore,
		\[
		\mathcal{K}_\beta(X,Y;\psi)
		\ge \inf_{w\in(a,b)}\psi(w)\,
		\frac{1}{(b-a)^{\beta-1}}\frac{1}{\Gamma(\beta+1)}
		\left(-\int_a^b S_X(w)\ln S_Y(w)\,dw\right)^\beta,
		\]
(ii)	Similarly, when $0 < \beta \leq 1$, the inequality
$
\big(S_X(w)\big)^\beta \geq S_Y(w)
$
holds.   Further, we handle the case by recognizing that $\phi_\beta(z) = z^\beta$ is a concave function for $z \geq 0$.  
	\end{proof}
	\begin{theorem}
		Let $X$ and $Y$ be two non-negative RVs with sfs $S_X$ and $S_Y$, respectively and $\psi(w)=\left(\zeta(w)\right)^\beta$,  $\zeta(w)\geq 0$, then we have 
		\begin{align*}
		\mathcal K_\beta(X,Y;\psi)\geq(\leq)\frac{1}{\Gamma(\beta+1)} \left(\mathcal K^{\zeta}(X,Y)\right)^\beta~\text{for}~\beta>1~(0<\beta<1),
		\end{align*}
		where $\mathcal K^\zeta(X,Y)$ is WCRI defined in \eqref{1.8} with weight function $\zeta(w)$.
	\end{theorem}
	\begin{proof}
		Let $\beta\geq 1$, then  $S_X(w)\geq \left(S_X(w)\right)^\beta$.  From (\ref{eq1.9}), we have 
		\begin{align*}
		\mathcal K_\beta(X,Y;\psi)&=\frac{1}{\Gamma(\beta+1)}\int_0^{\infty}\psi(w) S_X(w)\left(-\ln (S_Y(w))\right)^\beta dw\\
			&\geq \frac{1}{\Gamma(\beta+1)}\int_0^{\infty} \psi(w)S_X^\beta (w)\left(-\ln S_Y(w)\right)^\beta dw\\
			&=\frac{1}{\Gamma(\beta+1)} \int_0^{\infty}\left(\zeta(w)S_X(w)\big(-\ln S_Y(w)\big)\right)^\beta \,dw.
		\end{align*}
		Let $\Delta(w)=\zeta(w)S_X(w)\left(-\ln S_Y(w)\right)$ and $g(\Delta(w))=\left(\zeta(w)S_X(w)\left(-\ln S_Y(w)\right)\right)^\beta$. Since $g(z)=z^\beta$ is convex in $z\geq 0$, for $\beta\geq 1$, using Jensen's inequality
		we obtain the desired result. 
	\end{proof}
 \subsection{WFGCRI under Mixture Hazard Rate Model}
The mixture hazard model provides a flexible framework for representing heterogeneous failure mechanisms by combining a finite number of underlying hazard rate functions. This formulation
captures variability arising from latent risk sources and allows the overall system to
exhibit behavior that neither component model can achieve on its own.
Let $r_1(\cdot), r_2(\cdot), \dots, r_k(\cdot)$ be $k$ hazard rate functions and let
$p_1,\dots,p_k$ be positive weights such that $p_i>0,~~ \sum_{i=1}^k p_i = 1.$
The {mixture hazard rate model} is defined by
$	r_p(w)=\sum_{i=1}^k p_i\, r_i(w), ~ w>0.$ The sf corresponding to $r_p(\cdot)$ is given by
\begin{align}\label{eq:2.3}
	S_p(w)
	&=\exp\!\left(-\int_0^{w} r_p(t)\,dt\right) \notag\\
	&=\exp\!\left(-\sum_{i=1}^k p_i \int_0^{w} r_i(t)\,dt\right) \notag\\
	&=\prod_{i=1}^k
	\exp\!\left(-\int_0^{w} r_i(t)\,dt\right)^{p_i}.
\end{align}
Since $S_i(w)=\exp\!\left(-\int_0^{w} r_i(t)\,dt\right)$, it follows that
$S_p(w)=\prod_{i=1}^k S_i(w)^{p_i},$
which is the natural extension of the classical two-component mixture hazard rate model.

\medskip

The following theorem establishes an upper bound for the WFGCRI of the mixture
in terms of the corresponding WFGCRI measures of the components.

\begin{theorem}\label{thm:kmixtureLWRI}
	For the mixture hazard model defined in \eqref{eq:2.3}, the WFGCRI measure satisfies the upper bound
	$$
	\mathcal{K}_\beta(S_p, S_Y;\psi)
	\;\le\;
	\sum_{i=1}^k p_i\, \mathcal{K}_\beta(S_i, S_Y;\psi).
	$$
\end{theorem}

\begin{proof}
	From the definition of the WFGCRI  measure, we have
	\begin{align*}
		\mathcal{K}_\beta(S_p, S_Y;\psi)
		&=
		\frac{1}{\Gamma(\beta+1)}
		\int_0^{\infty}
		\psi(w)
		\prod_{i=1}^k S_i(w)^{p_i}
		\bigl(-\ln S_Y(w)\bigr)^\beta
		\,dw.
	\end{align*}
	By the weighted arithmetic mean–geometric mean inequality,
	\[
	\prod_{i=1}^k S_i(w)^{p_i}
	\;\le\;
	\sum_{i=1}^k p_i S_i(w),
	\qquad w>0.
	\]
	Therefore,
	\begin{align*}
		\mathcal{K}_\beta(S_p, S_Y;\psi)
		&\le
		\frac{1}{\Gamma(\beta+1)}
		\int_0^{\infty}
		\psi(w)
		\left(
		\sum_{i=1}^k p_i S_i(w)
		\right)
		\bigl(-\ln S_Y(w)\bigr)^\beta
		\,dw\\
		&=
		\sum_{i=1}^k p_i
		\frac{1}{\Gamma(\beta+1)}
		\int_0^{\infty}
		\psi(w)
		S_i(w)
		\bigl(-\ln S_Y(w)\bigr)^\beta
		\,dw\\
		&=
		\sum_{i=1}^k p_i\, \mathcal{K}_\beta(S_i, S_Y;\psi).
	\end{align*}
		Equality  holds if and only if
	$S_1(w)=S_2(w)=\cdots=S_k(w)$ for almost all $w>0$.
\end{proof}
An illustration of Theorem 2.9 is provided by the following example.
\begin{exe}
	Let $S_1,S_2$ and $S_3$ be exponential sfs with
	$S_i(w)=\exp(-\lambda_i w)$, where $(\lambda_1,\lambda_2,\lambda_3)=(1.2,1.5,2.5)$.
	Let $(p_1,p_2,p_3)=(0.3,0.4,0.3)$.
	Under the three-component mixture hazard model
	$r_p(w)=\sum_{i=1}^3 p_i\lambda_i$,
	the induced sf is
	$S_p(w)=\prod_{i=1}^3 S_i(w)^{p_i}$. Take $S_Y(w)=\exp(-w)$, $\psi(w)=w$, and $\beta>0$. Figure~\ref{fig:1} plots
	$\mathcal{K}_\beta(S_p,S_Y;\psi)$ and
	$\sum_{i=1}^3 p_i\mathcal{K}_\beta(S_i,S_Y;\psi)$ as functions of $\beta$,
	showing that the WFGCRI of the mixture is uniformly dominated by the
	corresponding convex combination, in agreement with Theorem~2.9.
	\begin{figure}
		\centering
		\includegraphics[width=5in, height=2.5in]{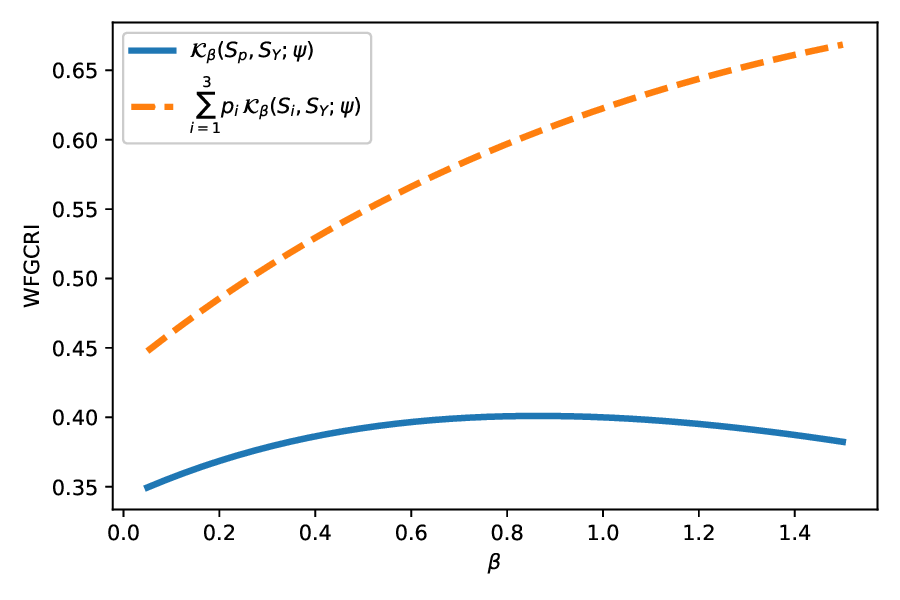}
		\caption{Graphical illustration of the WFGCRI for a mixture of three exponential distributions with different parameter settings (Example 2.1).}\label{fig:1}
	\end{figure}
\end{exe}
\section{Dynamic Weighted Fractional Generalized Cumulative Residual Inaccuracy}
In this section, we introduce a dynamical formulation of the WFGCRI motivated by reliability theory. Consider a system that begins operation at time $0$ and is observed to be functioning at a fixed inspection time $t>0$. Thus, the system is known to have survived up to time $t$, while its exact failure time remains unknown. Unlike the past lifetime setting, the uncertainty here concerns the future, since failure occurs after the inspection time. Accordingly, the failure time lies in $(t,\infty)$, and the analysis is formulated in terms of the residual lifetime, that is, the remaining lifetime conditional on survival up to time $t$.
Now, we introduce the time-dependent version of WFGCRI, which is known as dynamic  weighted  fractional generalized cumulative residual inaccuracy (DWFGCRI).  Let $S_X$ and $S_Y$ be  sfs of the RVs $X$ and $Y$, respectively. Then, the  DWFGCRI measure is defined as: 
\begin{align}\label{eq3.1}
\mathcal K^t_\beta(X,Y;\psi)=\frac{1}{\Gamma(\beta+1)}\int_t^{\infty}\psi(w)\frac{S_X(w)}{S_X(t)}\left[-\ln \frac{S_Y(w)}{S_Y(t)}\right]^\beta dw,~~ \beta>0, ~t>0.
\end{align}
When $t=0$, DWFGCRI coincides with the WFGCRI.

 An illustrative example of DWFGCRI is presented below.
\begin{exe}
Consider exponential and Rayleigh distributions with sfs
\( S_X(w)=e^{-aw}, \, a>0 \), and \( S_Y(w)=e^{-(bw)^2}, \, b>0\),
respectively. We present numerical plots of DWFGCRI for
different parameter values using the weight functions \( \psi(w)=w \)
and \( \psi(w)=w^2 \), as shown in Figures~\hyperref[fig2]{2(a)} and~\hyperref[fig2]{2(b)}, respectively.
It is evident from Figure~\hyperref[fig2]{2}  that DWFGCRI exhibits non-monotonic behavior with respect to $t$.
    		\begin{figure}[H] 
			\centering
			\begin{minipage}[b]{0.46\linewidth}
				\includegraphics[height=6cm,width=8.5cm]{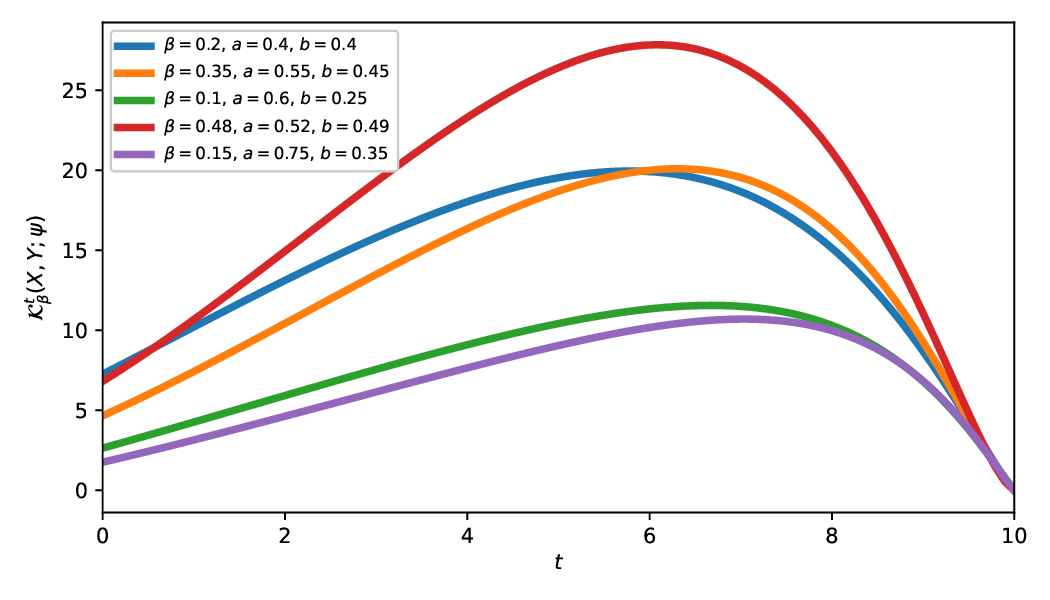}
				\centering{(a)}
			\end{minipage}
			\quad
			\begin{minipage}[b]{0.46\linewidth}
				\includegraphics[height=6cm,width=8.5cm]{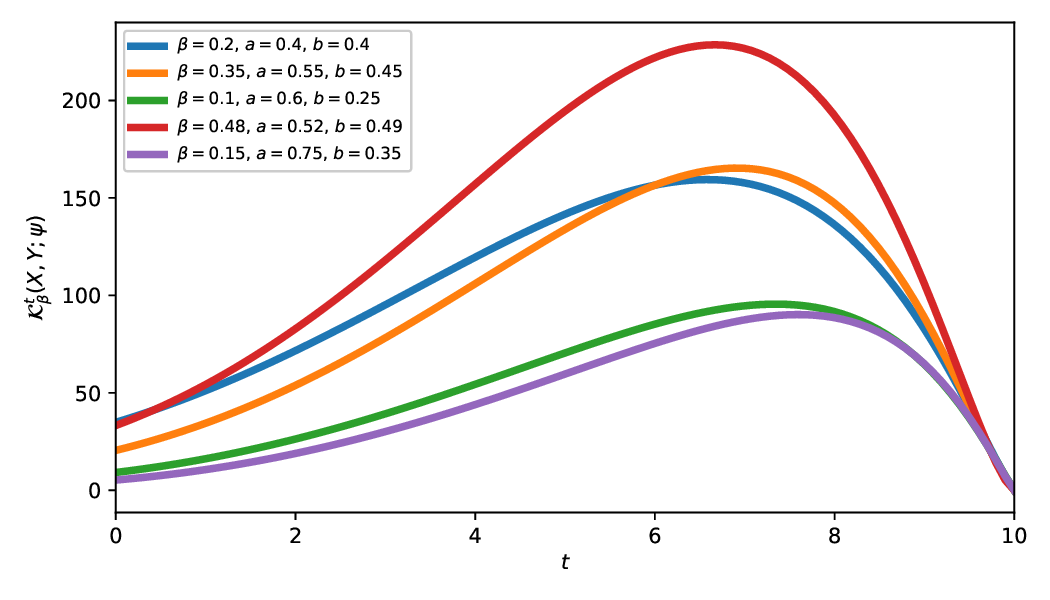}
				\centering{(b)}
			\end{minipage}
			\caption{Plots of DWFGCRI for exponential and Rayleigh distributions  for different parameter values and weight functions: (a) \(\psi(w)=w\) and (b) \(\psi(w)=w^2\) (Example 3.1).
			}\label{fig2}
		\end{figure}
\end{exe}
Similar to  Theorem 2.6, we get the following result concerning the effect of an affine transformation. The proof is omitted for brevity.  
\begin{theorem}
		Suppose $X_1,X_2,Y_1$ and $Y_2$ are RVs with $Y_1=aX_1+b$ and $Y_2=aX_2+b$, $a>0,b\geq 0$. Then, we have 
	\begin{align*}
		\mathcal K^t_\beta(Y_1,Y_2;\psi)=\frac{a}{\Gamma(\beta+1)}	\mathcal K^{\frac{t-b}{a}}_\beta\left(X_1,X_2;{\overline{\psi}}\right),
	\end{align*}
	where $\overline \psi(w)={\psi(aw+b)}$.
\end{theorem}
\subsection{DWFGCRI under Proportional Hazard Rate Model}
 The RVs $X$ and $Y$ are said to satisfy the proportional hazard rate (PHR) model if there exists a positive constant $\alpha > 0$ such that
$	r_Y(w) = \alpha \, r_X(w),~ w \ge 0,$
where $r_X(\cdot)$ and $r_Y(\cdot)$ denote the hazard rate functions of $X$ and $Y$, respectively.  
Equivalently, in terms of the sfs $S_X(\cdot)$ and $S_Y(\cdot)$, the PHR model can be expressed as $	S_Y(w) = \big(S_X(w)\big)^{\alpha},~ w \ge 0.$
This model was introduced in the seminal work of \cite{cox1972}.
 The PHR model is widely used in survival analysis, particularly in medical statistics and reliability engineering, where it captures the effect of covariates (such as treatment, environment, or subject characteristics) on the lifetime of an individual or system.

	Here we study the DWFGCRI between two PHR models. Consider two PHR models $U$ and $V$ with sfs $[S_X(w)]^\alpha$ and $[S_Y(w)]^\alpha$, respectively. Then, DWFGCRI between these two models is defined as:
	\begin{align}\label{eq:2}
		\mathcal K^t_\beta(U,V;\psi)=\int_t^\infty\psi(w)\left[\frac{S_X(w)}{S_X(t)}\right]^\alpha\left[-\ln\left(\frac{S_Y(w)}{S_Y(t)}\right)^\alpha\right]^\beta\,dx.
	\end{align}
	The measure defined in (\ref{eq:2}) provides inaccuracy between two PHR models. It is not hard to see that when $t= 0$, (\ref{eq:2}) reduces to
	\begin{align*}
		\mathcal K_\beta(U,V;\psi)=\int_0^\infty\psi(w)\left[{S_X(w)}{}\right]^\alpha\left[-\ln\left({S_Y(w)}{}\right)^\alpha\right]^\beta\,dx,
	\end{align*}which is WFGCRI between two PHR models.
	\begin{exe}
Let $X$ and $Y$ follow Weibull distributions with a common shape parameter of $2$. Their respective sfs are given by
\begin{equation*}
	S_X(w) = e^{-\eta_1 w^2} \quad \text{and} \quad S_Y(w) = e^{-\eta_2 w^2}, \quad w \geq 0,
\end{equation*}
where $\eta_1, \eta_2 > 0$ represent the scale parameters. The DWFGCRI measure for the PHR model with $\psi(w)=w$  in
		(\ref{eq:2}) is obtained as
		\begin{align*}
			\mathcal K^t_\beta(U,V;\psi)&=\int_t^\infty w\left[\frac{e^{-\eta_1 w^2}}{e^{-\eta_1 t^2}}\right]^\alpha\left[-\ln\left(\frac{e^{-\eta_2 w^2}}{e^{-\eta_2 t^2}}\right)^\alpha\right]^\beta\,dw\\
			&= \frac{\eta_2^\beta}{2 \alpha \eta_1^{\beta + 1}}.
		\end{align*}
	\end{exe}
\begin{theorem}
Assume two non-negative RVs $X$ and $Y$ with sfs $S_X$ and $S_Y$, respectively. For two RVs $U$ and $V$ satisfying the PHR model, we obtain the following
\begin{align*}
\mathcal{K}^t_\beta(U,V;\psi)\leq(\geq)\alpha^\beta\mathcal{K}^t_\beta(X,Y;\psi)~~\text{if}~~\alpha>1~(0<\alpha<1). 
\end{align*}
\end{theorem}
\begin{proof}
First, we prove for  $\alpha>1$. From \eqref{eq3.1}, we have
\begin{align}\label{eq3.5}
	\mathcal{K}^t_\beta(U,V;\psi)
	&=\frac{1}{\Gamma(\beta+1)}\int_t^{\infty}\psi(w)\,
\left[	\frac{S_X(w)}{S_X(t)}\right]^\alpha\nonumber
	\left[-\ln\left(\frac{S_Y(w)}{S_Y(t)}\right)^\alpha\right]^\beta\,dw\\
	&=\frac{\alpha^\beta}{\Gamma(\beta+1)}\int_t^{\infty}\psi(w)\,
	\left[	\frac{S_X(w)}{S_X(t)}\right]^\alpha
	\left[-\ln\left(\frac{S_Y(w)}{S_Y(t)}\right)\right]^\beta\,dw .
\end{align}
For all $w\ge t$ we have $0\le \frac{S_X(w)}{S_X(t)} \le 1.$
If $\alpha\ge 1$, then for all $x\in[0,1]$, $x^\alpha\le x$. Hence, $\left(\frac{S_X(w)}{S_X(t)}\right)^\alpha
\le
\frac{S_X(w)}{S_X(t)},~ w\ge t.$
Therefore, using (\ref{eq3.5}) yields
\begin{align*}
	\mathcal{K}^t_\beta(U,V;\psi)\le \frac{\alpha^\beta}{\Gamma(\beta+1)}\int_t^{\infty}\psi(w)
	\frac{S_X(w)}{S_X(t)}
	\left(-\ln\left(\frac{S_Y(w)}{S_Y(t)}\right)\right)^\beta dw.
\end{align*}
Now assume that $0<\alpha<1$. Since $0\le S_X(w)/S_X(t)\le 1$ for all $w\ge t$,
we have $\left(\frac{S_X(w)}{S_X(t)}\right)^\alpha
\ge \frac{S_X(w)}{S_X(t)}.$
Thus, from (\ref{eq3.5}), we have
\begin{align*}
	\mathcal{K}^t_\beta(U,V;\psi)\ge \frac{\alpha^\beta}{\Gamma(\beta+1)}
	\int_t^{\infty}\psi(w)
	\frac{S_X(w)}{S_X(t)}
	\left(-\ln\left(\frac{S_Y(w)}{S_Y(t)}\right)\right)^\beta dw.
\end{align*} Hence the result follows.
\end{proof}
\subsection{DWFGCRI under Proportional Odds Model}
The proportional odds (PO) model provides an important alternative to the proportional hazards framework and has been widely used in survival and reliability analysis due to its flexibility in describing lifetime distributions. We say that RVs $X$ and $Y$ satisfy the PO model if
$\Phi_Y(t)=\alpha \Phi_X(t), ~\alpha>0,$
where $\Phi_X(t)=\frac{S_X(t)}{F_X(t)}$ and $\Phi_Y(t)=\frac{S_Y(t)}{F_Y(t)}$. Equivalently, the sfs are related by $S_Y(t)=\frac{\alpha S_X(t)}{1-\bar{\alpha} S_X(t)}, ~~ \bar{\alpha}=1-\alpha.$
 Consider the RVs $U$ and $V$ with respective odds functions
$\Phi_X(t)$ and $\Phi_Y(t)$. For further details on PO model, one may refer to \cite{kir2001}.

Now, we  formulate the DWFGCRI within the framework of PO model. The DWFGCRI measure associated with the RVs $U$ and $V$ is obtained as:
\begin{align}\label{eq:3.4}
	\mathcal{K}^t_\beta(U,V;\psi)
	&=
	\frac{1}{\Gamma(\beta+1)}
	\int_t^{\infty}
	\psi(w)\,\nonumber
	\frac{\dfrac{\alpha S_X(w)}{1-\bar{\alpha} S_X(w)}}{\dfrac{\alpha S_X(t)}{1-\bar{\alpha} S_X(t)}}
	\left[
	\ln\!\left(
	\frac{\dfrac{\alpha S_Y(t)}{1-\bar{\alpha} S_Y(t)}}
	{\dfrac{\alpha S_Y(w)}{1-\bar{\alpha} S_Y(w)}}
	\right)
	\right]^{\beta}
	\,dw\\
	&=\frac{1}{\Gamma(\beta+1)}
	\int_t^{\infty}
	\psi(w)\,
	\frac{S_X(w)}{S_X(t)}
	\frac{1-\bar{\alpha} S_X(t)}{1-\bar{\alpha} S_X(w)}
	\left[
	\ln\!\left(\frac{S_Y(t)}{S_Y(w)}\right)
	+
	\ln\!\left(\frac{1-\bar{\alpha} S_Y(w)}{1-\bar{\alpha} S_Y(t)}\right)
	\right]^{\beta}
	\,dw.
\end{align}

To elucidate the implications of equation~\eqref{eq:3.4}, we present the following illustrative example.
\begin{exe}
	Let $X$ and $Y$ be two nonnegative RVs with sfs given by
$S_X(w) = (1+w)e^{-w}$ and
$S_Y(w) = e^{-2w}, ~w>0$ respectively. Then, on using (\ref{eq:3.4}), we have
\begin{align*}
&	\mathcal K^t_\beta(U,V;\psi)\\&~~~~~~=\frac{1}{\Gamma(\beta+1)}
	\int_t^{\infty}
	\psi(w)\,
	\left[\frac{(1+w)e^{-w}}{(1+t)e^{-t}}\right]
	\frac{1-\bar{\alpha} (1+t)e^{-t}}{1-\bar{\alpha} (1+w)e^{-w}}
	\left[
	2(w-t)
	+
	\ln\!\left(\frac{1-\bar{\alpha} e^{-2w}}{1-\bar{\alpha} e^{-2t}}\right)
	\right]^{\beta}
	\,dw.
\end{align*}
Since the explicit expression is analytically intractable, we numerically evaluate and plot the DWFGCRI for $\alpha = 0.5$ and different values of the parameter $\beta$, using the weight functions $\psi(w)=w^{0.3}$ and $\psi(w)=w^{2.4}$. The results are displayed in Figure~\ref{fig2*}. 
\end{exe}
    		\begin{figure}[H] 
	\centering
	\begin{minipage}[b]{0.46\linewidth}
		\includegraphics[height=6cm,width=8.5cm]{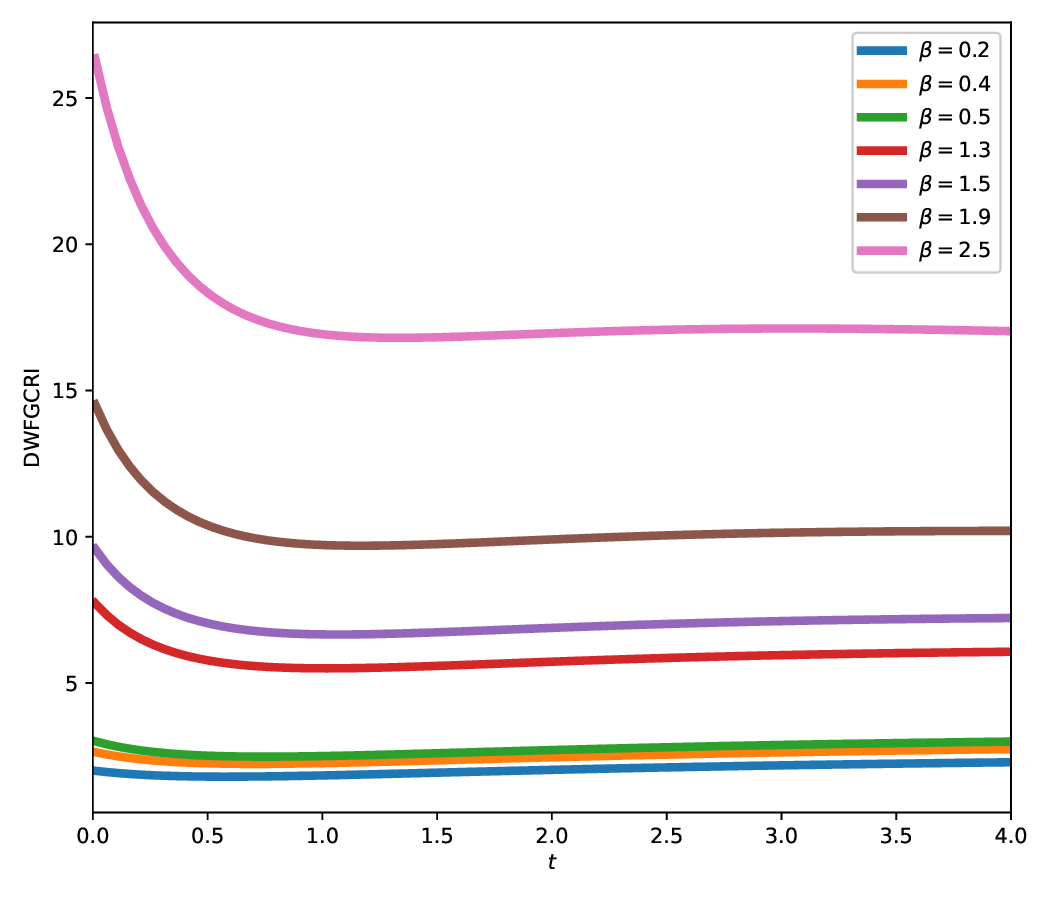}
		\centering{(a)}
	\end{minipage}
	\quad
	\begin{minipage}[b]{0.46\linewidth}
		\includegraphics[height=6cm,width=8.5cm]{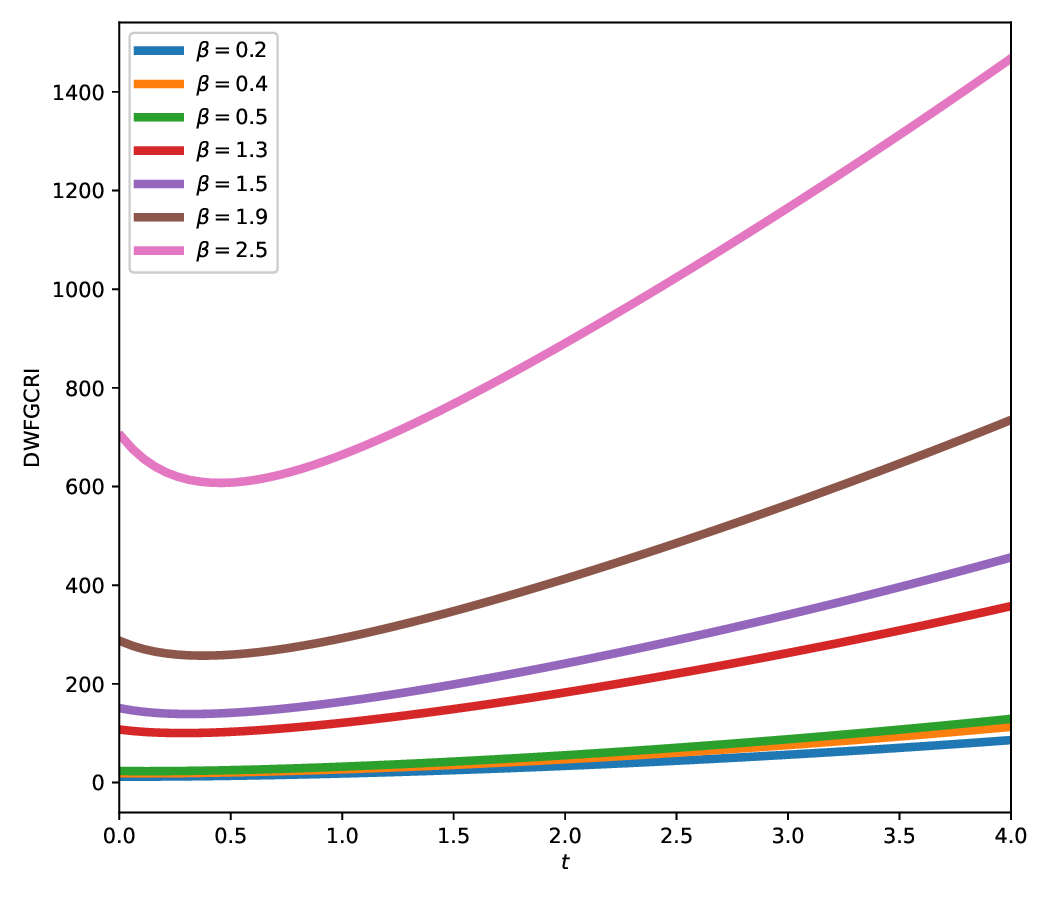}
		\centering{(b)}
	\end{minipage}
\caption{Graphical representations of  DWFGCRI  under PO model with two different weight functions: (a) $\psi(w)=w^{0.3}$ and (b) $\psi(w)=w^{2.4}$ (Example 3.3).}
\label{fig2*}
\end{figure}
\section{Non-parametric Estimation}
In this section, first we  empirically estimate the WFGCRI under the PHR model. Suppose a random sample \(X_1,\ldots,X_n\) is drawn from a population with sf \(S_X\). 
The order statistics corresponding to the sample are denoted by
$
X_{1:n} \le \cdots \le X_{n:n},$ where
$X_{1:n} = \min(X_1,\ldots,X_n),$ and 
$X_{n:n} = \max(X_1,\ldots,X_n).
$
The empirical sf of \(S_X\) is given by
\begin{align}\label{4.1}
	\widehat S_X(w) 
	=
\begin{cases}
	1 &~\text{if}~ w < X_{1:n}, \\
	1-\dfrac{j}{n} &~\text{if}~ X_{j:n} \le w < X_{j+1:n},\quad j=1,\ldots,n-1, \\
	0 & ~\text{if}~w \ge X_{n:n}.
\end{cases}
\end{align}
For the RVs \(X\) and \(Y\) with sfs
\(S_X\) and \(S_X^\alpha\) (\(\alpha>0\)), respectively, the empirical WFGCRI measure for the 
weight function \(\psi(w)=w\)  is given by
\begin{align}\label{eq4.2}
    \widehat {\mathcal K}_\beta\!\left(\widehat S_X,\,\widehat S_X^\alpha; \psi\right)\nonumber
&=
\frac{{1}}{\Gamma(\beta+1)}\int_{0}^{\infty}
w\,\widehat S_X(w)
\Bigl(-\ln\bigl(\widehat S_X(w)\bigr)^\alpha\Bigr)^{\beta}\,dw\\\nonumber
&=
\frac{{\alpha^\beta}}{\Gamma(\beta+1)}\sum_{j=1}^{n-1}
\int_{X_{j:n}}^{X_{j+1:n}}
w\,\widehat S_X(w)
\bigl[-\ln(\widehat S_X(w))\bigr]^{\beta}\,dw\\
&=\frac{{\alpha^\beta}}{\Gamma(\beta+1)}
\sum_{j=1}^{n-1}
\frac{X_{j+1:n}^{2}-X_{j:n}^{2}}{2}
\left(1-\frac{j}{n}\right)
\left[-\ln\!\left(1-\frac{j}{n}\right)\right]^{\beta}.
\end{align}
It is worth noting that
$
\mathcal K_\beta\!\left(\widehat S_X,\,\widehat S_X^\alpha; \psi\right)
$
is exactly the WFCRE with coefficient \(\alpha^\beta\).

Now consider two independent non-negative samples
$X_1,\ldots,X_n$ and
$Y_1,\ldots,Y_m,$
with empirical sfs
$\widehat S_X(w)=\frac{1}{n}\sum_{i=1}^n \mathbf{1}\{X_i>w\}$ and
$\widehat S_Y(w)=\frac{1}{m}\sum_{j=1}^m \mathbf{1}\{Y_j>w\}.$
Let $0 = t_0 < t_1 < \cdots < t_k$
be the ordered grid formed by the distinct observed values in 
\(\{X_1,\ldots,X_n,Y_1,\ldots,Y_m\}\).
Since both empirical sfs are constant on each interval 
\([t_k,t_{k+1})\), the plug--in estimator of WFGCRI is given by
\begin{align}\label{4.3}
    \widehat{\mathcal K}_\beta(X,Y;\psi)\nonumber
&=\frac{{1}}{\Gamma(\beta+1)} \int_0^{\infty}w
\widehat S_X(w)\,
\bigl(-\ln(\widehat S_Y(w))\bigr)^{\beta}\,dw\\\nonumber
&=\frac{{1}}{\Gamma(\beta+1)} \sum_{k=0}^{K-1}
\widehat S_X(t_k)\,
\Bigl(-\ln\bigl(\widehat S_Y(t_k)\bigr)\Bigr)^{\beta}
\int_{t_k}^{t_{k+1}} w\,dw\\
&=\frac{{1}}{\Gamma(\beta+1)} \sum_{k=0}^{K-1}
\widehat S_X(t_k)\,
\Bigl(-\ln\bigl(\widehat S_Y(t_k)\bigr)\Bigr)^{\beta}
\frac{t_{k+1}^2-t_k^2}{2}.
\end{align}
\subsection{Simulation Study}
In this subsection, first we assess the performance of the estimator defined in  (\ref{eq4.2}) under the PHR model. Data are generated from an exponential distribution with rate parameter $\lambda = 0.8$. Using Monte Carlo simulations, we examine the behavior of the estimator across varying sample sizes: $n = 100, 300, 500, 700$ and $1000$ with $10,000$ replications. The results, including absolute bias (AB), root mean squared error (RMSE), 95\% confidence interval (CI) length, and the mean of the estimated values, together with the true value, are summarized in Table~\ref{tab1}.

To study the effect of the tuning parameter $\beta$, we evaluate the estimator for $\alpha=0.5$ and a range of values: $\beta = 0.2, 0.5, 0.7, 0.9, 1.3$ and $1.5$. This analysis allows us to investigate the robustness and performance of the estimator under varying degrees of influence controlled by $\beta$. The results demonstrate that the estimator performs consistently across all tested values of $\beta$. As the sample size increases, the AB, RMSE, and the average length of the CI all decrease, further confirming the consistency and improved precision of the estimator.
This outcome aligns with theoretical expectations and affirms the reliability of the proposed estimator, particularly under larger sample sizes and different values of $\beta$.

Figure~\hyperref[fig3]{4(a)-(d)} illustrates the  performance of the estimator for $\beta= 0.5$ across different sample sizes. The boxplot in Figure~\hyperref[fig3]{4(a)} shows that the variability of the estimates decreases steadily as $n$ increases, indicating improved stability. Figure~\hyperref[fig3]{4(b)} presents the mean estimates along with the corresponding confidence bounds, demonstrating clear convergence towards the theoretical value. The RMSE plot in Figure~\hyperref[fig3]{4(c)} further confirms a monotone decline in estimation error with increasing sample size, supporting the consistency of the estimator. Finally, the histogram in Figure~\hyperref[fig3]{4(d)}, corresponding to the largest sample size, shows a strong concentration of the estimates around the true  value, reflecting satisfactory finite-sample performance.
\begin{table}[!htbp]
	\centering
	\caption{AB, RMSE, 95\% CI length, mean of estimated  and true values of the WFGCRI under  PHR model for different values of $\beta$.}
	\label{tab1}
	\begin{tabular}{ccccccccc}
		\toprule
		$\beta$ & $n$ & AB & RMSE & 95\% CI length & ${\widehat{\mathcal K}}_\beta(X,Y;\psi)$ & ${\mathcal K_\beta}(X,Y;\psi)$ \\
		\toprule
		&100 &0.223415&0.533088&1.840806&1.855697&1.632282\\
		&300 &0.122429&0.284783&1.011046&1.754712&1.632282\\
		$\beta$=0.2&500 &0.089776&0.219095&0.765248&1.722058&1.632282\\
		&1000&0.055833&0.143709&0.533620&1.688115&1.632282\\
		\hline
		
		&100 &0.201601&0.571568&2.029858&1.858882&1.657282\\
		&300 &0.132203&0.343172&1.234686&1.789485&1.657282\\
		$\beta$=0.5&500 &0.085233&0.239779&0.896569&1.742515&1.657282\\
		&1000&0.057632&0.165144&0.595365&1.714913&1.657282\\
		\hline
		
		&100 &0.219689&0.634015&2.248742&1.854802&1.635114\\
		&300 &0.135493&0.352373&1.303289&1.770607&1.635114\\
		$\beta$=0.7&500 &0.102994&0.278984&0.996530&1.738108&1.635114\\
		&1000&0.065143&0.178544&0.653552&1.700257&1.635114\\
		\hline
		
		&100 &0.140001&0.578307&2.180485&1.730915&1.590914\\
		&300 &0.125696&0.359799&1.305886&1.716610&1.590914\\
		$\beta$=0.9&500 &0.095500&0.292826&1.015532&1.686414&1.590914\\
		&1000&0.077364&0.197120&0.709055&1.668278&1.590914\\
		\hline
		
		&100 &0.072155&0.576120&2.203513&1.531671&1.459516\\
		&300 &0.084750&0.385335&1.434364&1.544266&1.459516\\
		$\beta$=1.3&500 &0.088506&0.309925&1.178165&1.548022&1.459516\\
		&1000&0.064822&0.220991&0.785252&1.524338&1.459516\\
		\hline
		
		&100 &0.044472&0.594980&2.337166&1.425540&1.381068\\
		&300 &0.056224&0.371652&1.423099&1.437292&1.381068\\
		$\beta$=1.5&500 &0.086754&0.310450&1.164369&1.467822&1.381068\\
		&1000&0.065220&0.223847&0.848195&1.446287&1.381068\\
		\bottomrule
	\end{tabular}
\end{table}

Furthermore, we assess the performance of the proposed estimator for the WFGCRI given in~(\ref{4.3}), which is not under the PHR model. Random samples
$X_1,\ldots,X_n$ are generated from the true underlying distribution, taken to be an exponential distribution with rate parameter $\lambda_1 = 2.5$. Independently, samples
$Y_1,\ldots,Y_n$ are generated from an assumed exponential distribution with rate parameter $\lambda_2 = 3.5$. For each sample size, based on 10{,}000 Monte Carlo replications, the AB, RMSE, length of the 95\% CI, and the mean of the estimated values, along with the corresponding true value, are computed for different values of $\beta$ and reported in Table~\ref{tab2}.
It is evident from Table~\ref{tab2}   that increasing the sample size leads to a systematic reduction in AB, RMSE, and CI length, thereby demonstrating both the consistency of the estimator and the gain in estimation accuracy and precision.    
	\begin{figure}[!htbp]
	\centering
	\begin{minipage}[b]{0.45\linewidth}
		\includegraphics[height=7cm,width=8.3cm]{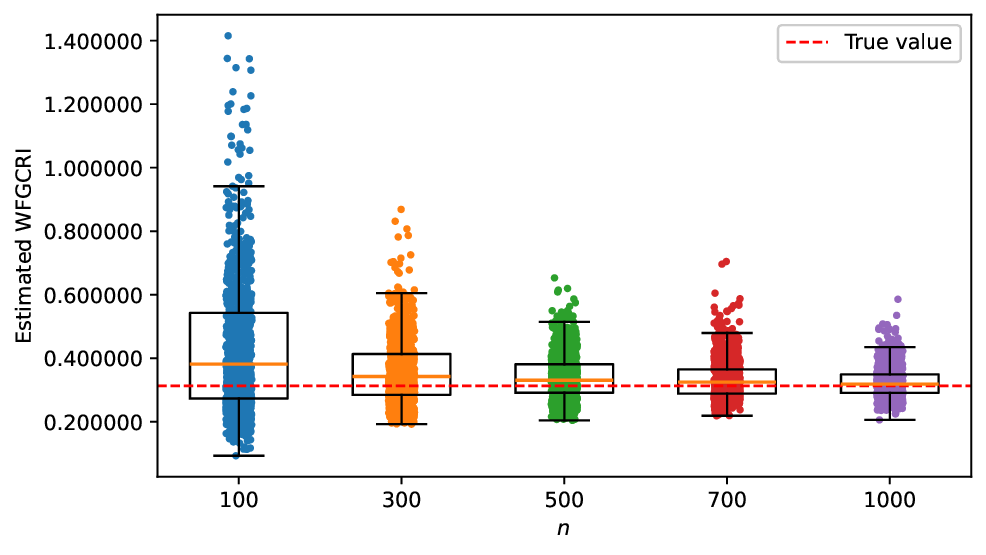}
		\centering{(a)  }
		
	\end{minipage}
	\quad
	\begin{minipage}[b]{0.45\linewidth}
		\includegraphics[height=7cm,width=8.3cm]{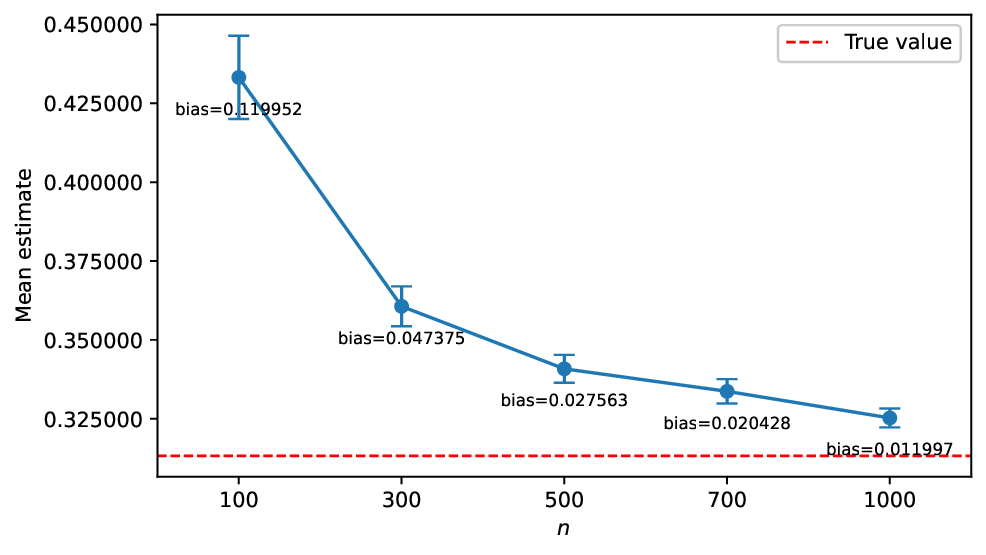}
		\centering{(b)}
	\end{minipage}
	\quad
	\begin{minipage}[b]{0.45\linewidth}
		\includegraphics[height=7cm,width=8.3cm]{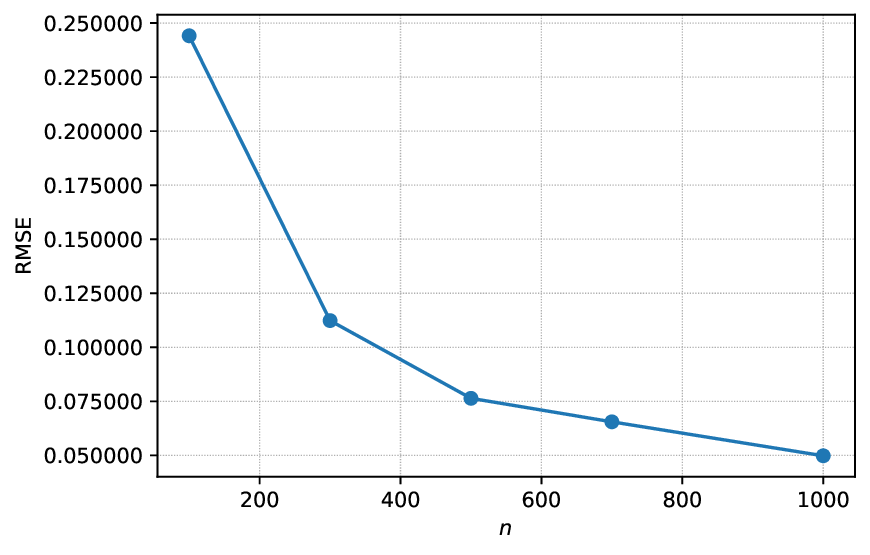}
		\centering{(c)}
	\end{minipage}
	\quad
	\begin{minipage}[b]{0.45\linewidth}
		\includegraphics[height=7cm,width=8.3cm]{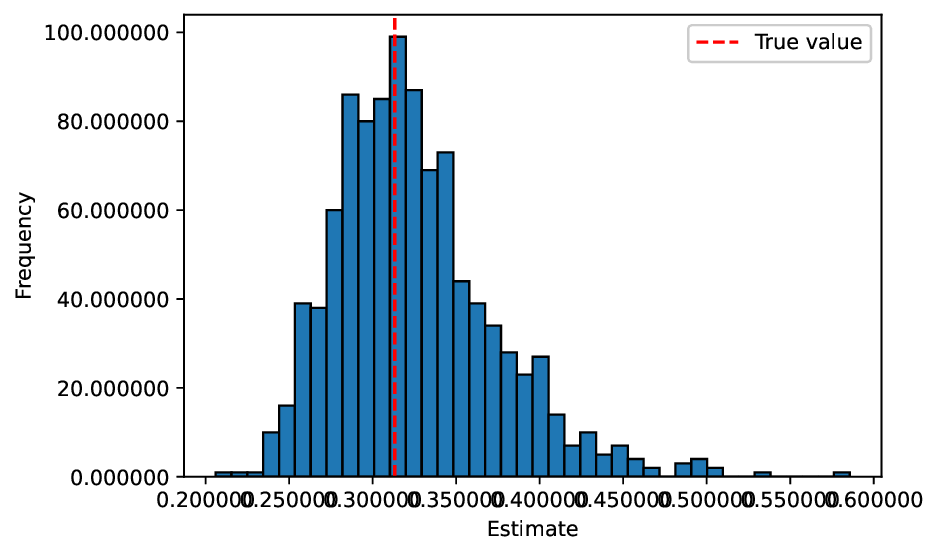}
		\centering{(d)}
	\end{minipage}
	\caption{Graphical presentation of simulation study of WFGCRI.}\label{fig3}
\end{figure}
\begin{table}[!htbp]\FloatBarrier
	\centering
	\caption{AB, RMSE, 95\% CI length, mean of estimated  and true values of the WFGCRI for different values of $\beta$.}\label{tab2}
	\label{tab:new}
	\begin{tabular}{cccccccc}
		\toprule
		$\beta$ & $n$ & AB & RMSE & 95\% CI length & $\widehat{\mathcal K}_\beta(X,Y;\psi)$ & ${\mathcal K_\beta}(X,Y;\psi)$ \\
		\toprule
		&100 &0.022318&0.084633&0.010125&0.252410&0.230092\\
		&300 &0.005960&0.042428&0.005210&0.236053&0.230092\\
		$\beta$=0.3&500 &0.003579&0.032070&0.003953&0.233671&0.230092\\
		&700 &0.001953&0.026429&0.003269&0.232045&0.230092\\
		&1000&0.000308&0.021441&0.002659&0.230400&0.230092\\
		\hline
		
		&100 &0.068760&0.166100&0.018752&0.352732&0.283972\\
		&300 &0.029253&0.086826&0.010139&0.313225&0.283972\\
		$\beta$=0.5&500 &0.019671&0.061898&0.007279&0.303643&0.283972\\
		&700 &0.013721&0.048462&0.005764&0.297693&0.283972\\
		&1000&0.006507&0.038591&0.004718&0.290479&0.283972\\
		\hline
		
		&100 &0.181358&0.333775&0.034752&0.525595&0.344238\\
		&300 &0.074526&0.155580&0.016938&0.418763&0.344238\\
		$\beta$=0.7&500 &0.045209&0.111737&0.012673&0.389447&0.344238\\
		&700 &0.032887&0.086644&0.009942&0.377125&0.344238\\
		&1000&0.025160&0.072082&0.008378&0.369398&0.344238\\
		\hline
		
		&100 &0.377731&0.650282&0.065649&0.789249&0.411518\\
		&300 &0.177643&0.318696&0.032816&0.589161&0.411518\\
		$\beta$=0.9&500 &0.109396&0.216690&0.023198&0.520914&0.411518\\
		&700 &0.083287&0.170201&0.018409&0.494805&0.411518\\
		&1000&0.054722&0.127749&0.014317&0.466240&0.411518\\
		\hline
		
		&100 &1.097496&1.726821&0.165347&1.624600&0.527104\\
		&300 &0.488480&0.789046&0.076852&1.015584&0.527104\\
		$\beta$=1.2&500 &0.316686&0.546199&0.055193&0.843790&0.527104\\
		&700 &0.224271&0.397705&0.040734&0.751375&0.527104\\
		&1000&0.171100&0.322088&0.033844&0.698204&0.527104\\
		\hline
		
		&100 &2.603723&3.956911&0.369535&3.266323&0.662601\\
		&300 &1.197069&1.865351&0.177425&1.859670&0.662601\\
		$\beta$=1.5&500 &0.834616&1.327955&0.128104&1.497217&0.662601\\
		&700 &0.640028&1.045640&0.102552&1.302629&0.662601\\
		&1000&0.436865&0.751801&0.075883&1.099466&0.662601\\
		\bottomrule
	\end{tabular}
\end{table}
\section{Applications}
In this section, we present applications of the studied concept (WFGCRI) to chaotic dynamical maps and to the assessment of market uncertainty by employing it under the PHR model. We further demonstrate the superiority of the  WFGCRI over the traditional WCRI in capturing structural complexity and uncertainty.
\subsection{Application of WFGCRI under PHR model}
Here, we present two applications of the WFGCRI under the PHR model. First, we demonstrate that WFGCRI can effectively detect chaotic behavior in deterministic dynamical systems. Second, we apply this measure to financial data to quantify uncertainty in stock price movements.
\subsubsection{Application to Chaotic Maps}
We begin by empirically estimating the WFGCRI under the PHR model for chaotic maps. Chaos theory is instrumental in describing complex systems that exhibit high sensitivity to initial conditions and seemingly random behavior. In such cases, traditional statistical or computational models may fall short. The WFGCRI under PHR, by incorporating both distributional and temporal characteristics, serves as a robust measure for identifying the underlying uncertainty in chaotic time series. In this study, we consider two chaotic maps: the Ricker  and Tent maps.
\subsubsection*{Ricker Map}	
The Ricker map \citep{Rick1954} is defined as $
x_{n+1} = x_n \, \mathrm{e}^{r(1 - x_n)},
$
where $r > 0$ is a growth rate parameter. This map models population dynamics with density-dependent regulation and has two fixed points: $x = 0$ and $x = 1$. The fixed point at $x = 0$ is always unstable for $r > 0$, while the fixed point at $x = 1$ is locally stable when $0 < r < 2$. As $r$ increases beyond 2, the system undergoes a period-doubling cascade, eventually leading to chaotic behavior.
To generate simulated data with a sample size of $n = 10{,}000$, we use an initial value $x_0 = 0.01$ and let $r \in [1, 5]$. The bifurcation diagram (see Figure \hyperref[fig4]{5(a)}) of the Ricker map illustrates the transition from stable fixed points to periodic cycles and ultimately to chaos, depending on the value of $r$. For example, the system converges to a stable fixed point at $r = 1$, while it exhibits chaotic behavior at $r = 3$, $r = 4$, and $r = 5$.
The graphical representation of the empirical WFGCRI measure defined in equation~(\ref{eq4.2}) is presented in Figure~\hyperref[fig4]{6(a)} for $\alpha = 0.5$, with $\beta \in (0.01,5]$ and selected values of the Ricker map parameter $r = 1, 3.1, 3.5, 4.0, 4.5,$ and $4.9$.
From Figure~\hyperref[fig4]{6(a)}, it is observed that as the chosen values of $r$ increase, the WFGCRI measure also increases, with the highest curve corresponding to $r=4.9$. This behavior indicates that the uncertainty quantified by WFGCRI becomes larger for higher values of the Ricker map parameter, reflecting increased irregularity and complexity in the system dynamics.
			\begin{figure}[] \label{fig4}
	\centering
	\begin{minipage}[b]{0.45\linewidth}
		\includegraphics[height=7cm,width=8.3cm]{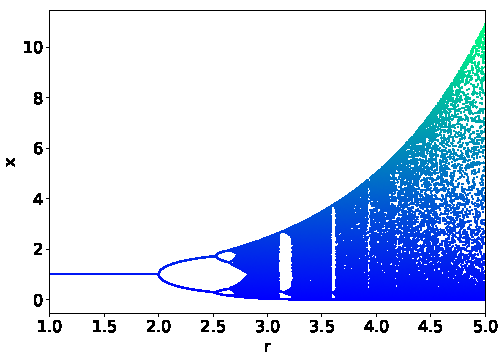}
		\centering{(a)  }
	\end{minipage}
	\quad
	\begin{minipage}[b]{0.45\linewidth}
		\includegraphics[height=7cm,width=8.3cm]{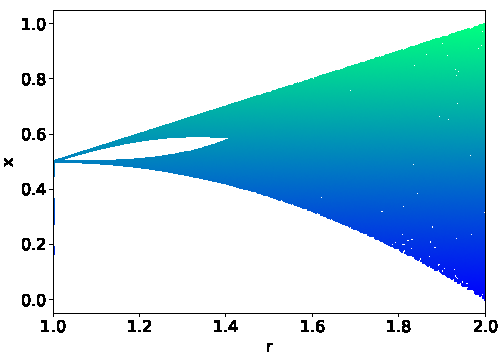}
		\centering{(b)}
	\end{minipage}
	\caption{Bifurcation diagrams of Ricker (left) and Tent maps (right).}
\end{figure}
			\begin{figure}[] 
	\centering
	\begin{minipage}[b]{0.47\linewidth}
		\includegraphics[height=7cm,width=8.5cm]{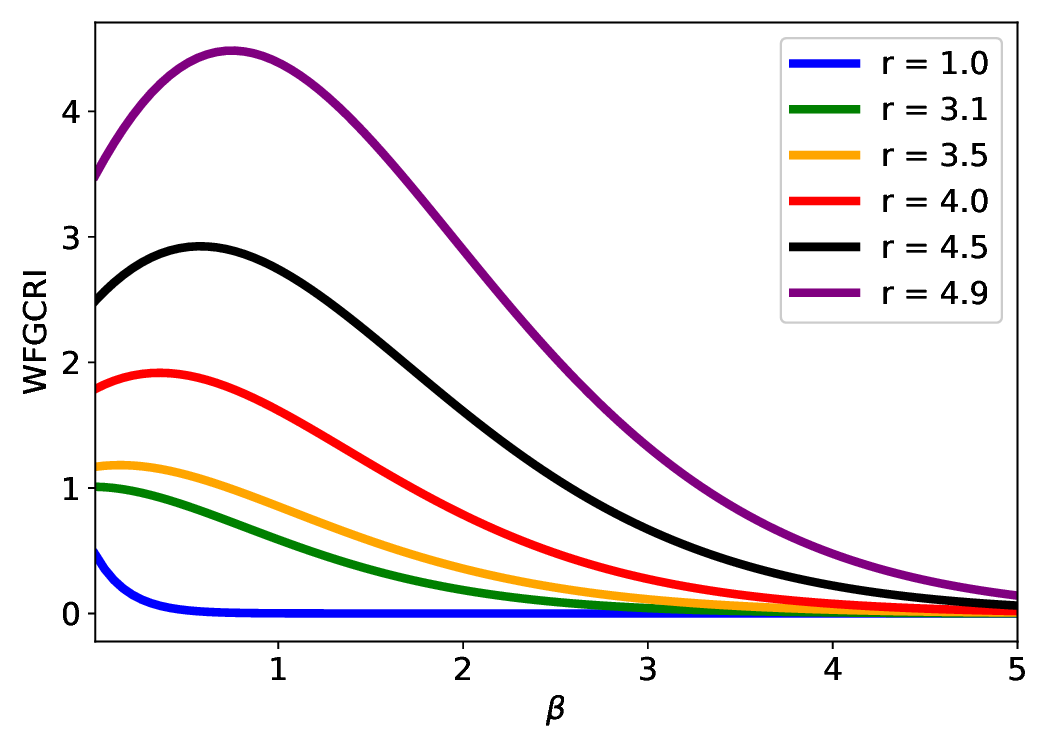}
		\centering{(a)  }
		
	\end{minipage}
	\quad
	\begin{minipage}[b]{0.47\linewidth}
		\includegraphics[height=7cm,width=8.5cm]{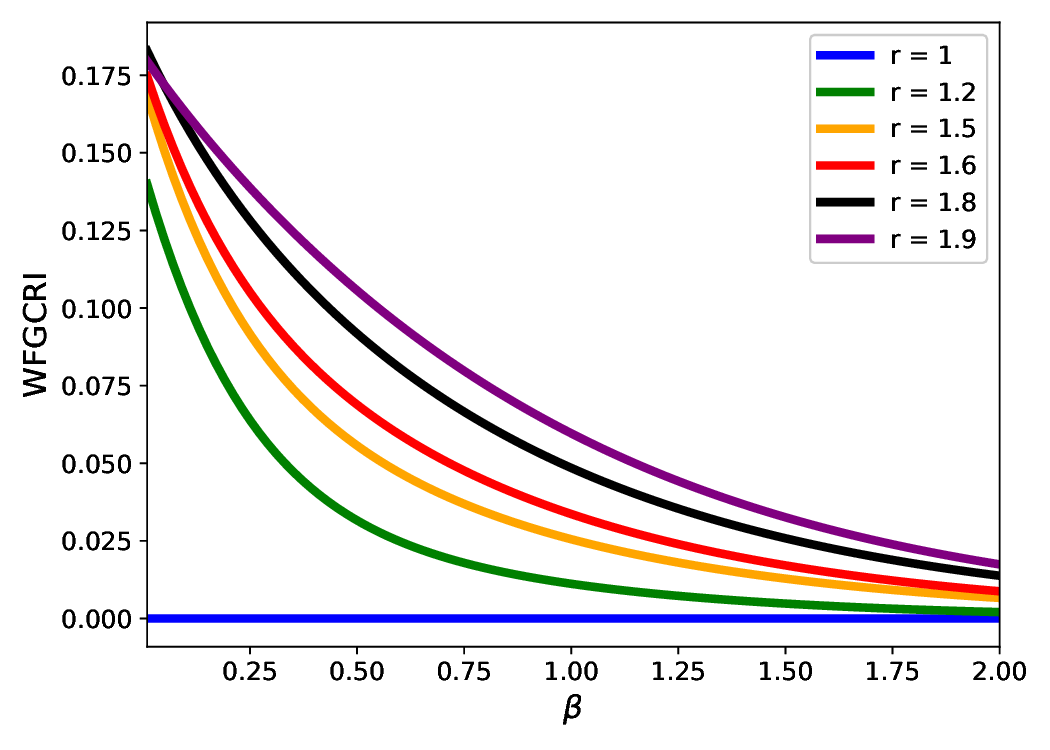}
		\centering{(b)}
	\end{minipage}
\caption{Plots of $\widehat{\mathcal K}_\beta\left({S}_n, {S}_n^\alpha;\psi\right)$ for the Ricker (left) and Tent maps (right), for selected values of $r$.}
\label{fig5}
\end{figure}
\subsubsection*{Tent Map}
The Tent map is a simple piecewise linear dynamical system defined by $$x_{n+1} =
\begin{cases}
	r x_n, & 0 \le x_n < \tfrac{1}{2}, \\
	r (1 - x_n), & \tfrac{1}{2} \le x_n \le 1,
\end{cases}$$
where \( r \in [0,2] \) is the control parameter. Despite its simplicity, the Tent map exhibits rich dynamical behavior, ranging from stable dynamics to fully developed chaos as the parameter \( r \) increases. The chaotic behavior arises from the stretching and folding mechanism inherent in the map.
To generate simulated data of size \( n = 10{,}000 \), we fix the initial condition \( x_0 = 0.01 \) and vary the control parameter \( r \) over the interval \( [1,2] \). The bifurcation diagram in Figure \hyperref[fig4]{5(b)} depicts the progression from regular dynamics to chaotic behavior as the control parameter $r$ approaches its upper limit.
We compute the WFGCRI measure for the Tent map to quantify the underlying dynamical uncertainty. In particular, the WFGCRI is plotted in Figure \hyperref[fig5]{6(b)} for selected values of $r = 1.0, 1.2, 1.5, 1.6, 1.8$ and $1.9$, with $\beta \in (0.01,2]$ and $\alpha = 0.5$.
The results show that the WFGCRI is nearly zero for lower values of $r$ (e.g., $r=1$), indicating minimal dynamical uncertainty. As the selected values of $r$ increase, the WFGCRI curves shift upward across $\beta,$ and the measure approaches $2$, reflecting increased uncertainty and the emergence of chaotic behavior in the system.
\subsubsection{Detecting Market Uncertainty using WFGCRI}
In this subsection, we apply the WFGCRI under the PHR model for $\beta \in (0.01,2]$ with $\alpha=5$ and 10 to assess temporal uncertainty in the Indian stock market, using daily closing prices of the Nifty 50 index from January 1, 2000, to December 31, 2024. This dataset comprises approximately 5,200 daily observations. Let \( P_t \) denote the closing price on day \( t \); the daily log returns are computed as 
\begin{align}\label{5.1}
    R_t = \ln(P_t) - \ln(P_{t-1}),
\end{align}
which are then shifted to non-negative values by defining \( Z_t = R_t - \min_t R_t \), thereby satisfying the non-negativity requirement of WFGCRI.
To capture the evolving structure of uncertainty, we compute WFGCRI over a rolling window of 250 trading days, with updates every 100 days. This yields a time series of WFGCRI estimates that reflect the market’s information uncertainty across different time horizons and fractional orders.
Figure~\ref{fig7} presents the time series of \( R_t \), where significant negative spikes mark episodes of acute market stress. Noteworthy among these are: 
\begin{itemize}
	\item \textbf{May 2004 (UPA 1 election crash)}: The BSE plunged by 15.52\% in a single day, marking its largest percentage fall ever.
	\item \textbf{2006–2009 financial turmoil}: Including January 2008’s “Black Monday”, March 2008 declines, and the major crash of October 2008.
	\item \textbf{March 2020 (COVID-19 pandemic)}: A sequence of historic losses culminating in a 3,900-point drop on March 23.
\end{itemize}
For further historical context, refer \url{https://en.wikipedia.org/wiki/Stock_market_crashes_in_India}.
Figure~\ref{fig8} illustrates the contour plot of the WFGCRI, where the vertical axis represents the fractional order $\beta \in (0.01,2]$  and the horizontal axis denotes time. The color intensity reflects the magnitude of the WFGCRI, which captures the  uncertainty associated with the shifted return process $Z_t$. It is evident from the figure that the WFGCRI is capable of effectively identifying structural breaks and crisis periods in financial time series, as highlighted by the pronounced contrasts and fine resolution observed in the contour patterns. Therefore, the WFGCRI serves as a simple yet powerful tool for tracking temporal variations in uncertainty and detecting market instability.
\begin{figure}[H]
	\centering
	\includegraphics[width=5.3in, height=3in]{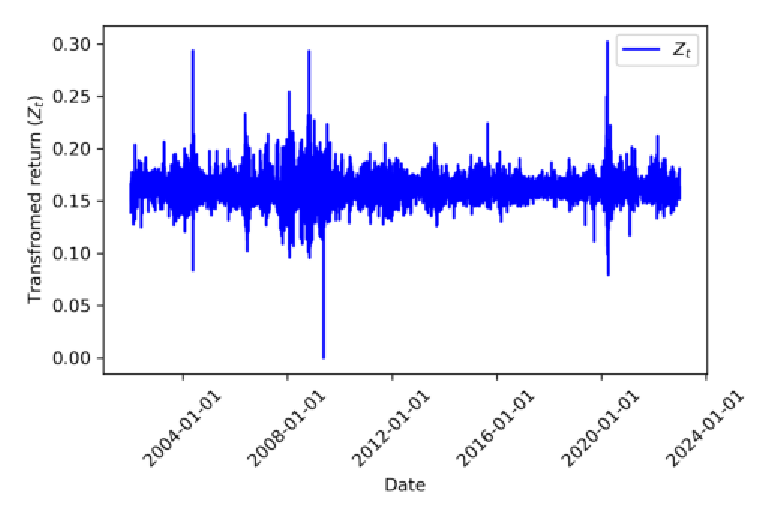}
	\caption{Transformed price returns of the Nifty 50 dataset.
	}\label{fig7}
\end{figure}
			\begin{figure}[H] 
	\centering
	\begin{minipage}[b]{0.47\linewidth}
		\includegraphics[height=7cm,width=8.6cm]{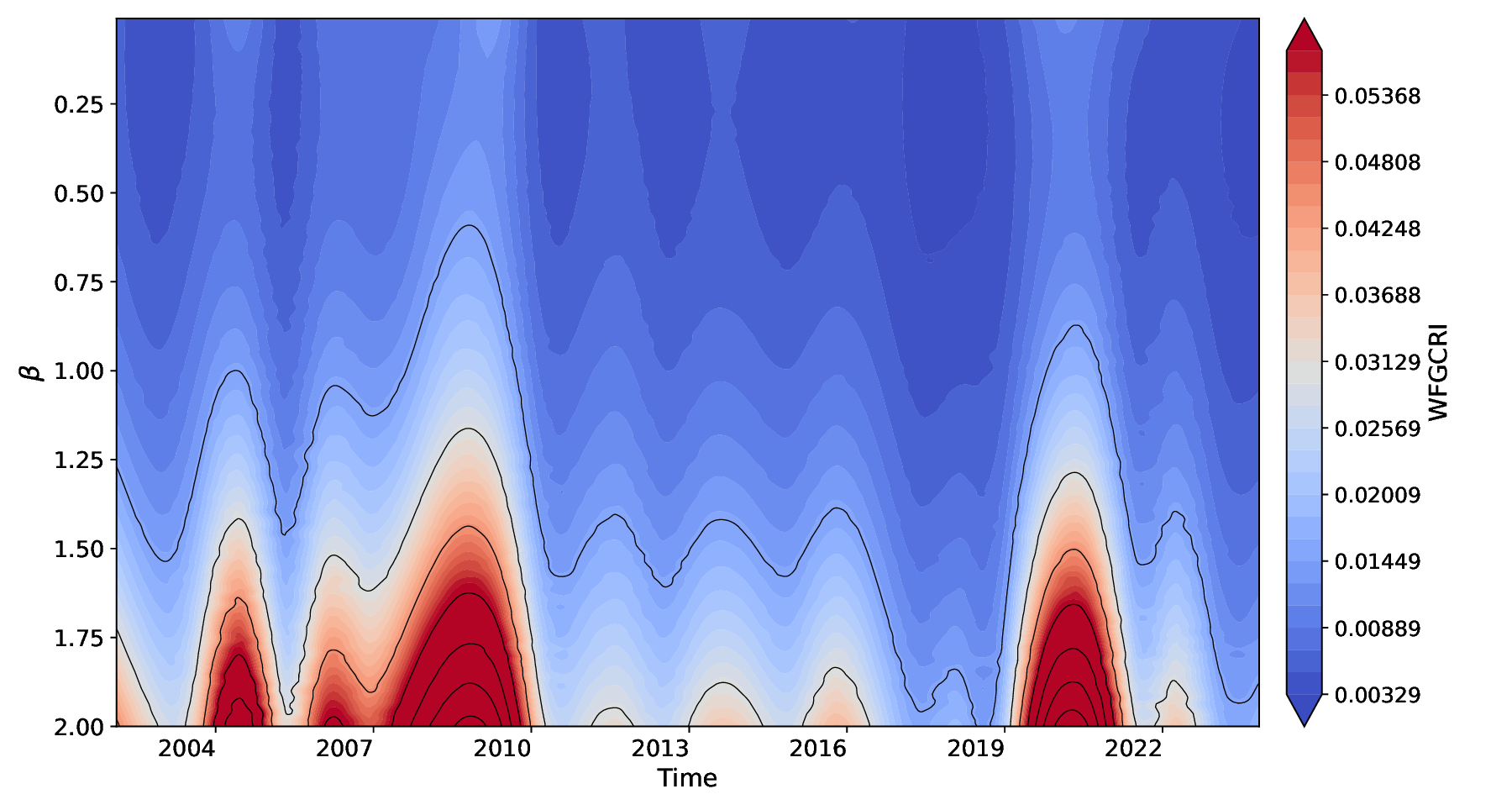}
		\centering{(a)}
	\end{minipage}
	\quad
	\begin{minipage}[b]{0.47\linewidth}
		\includegraphics[height=7cm,width=8.6cm]{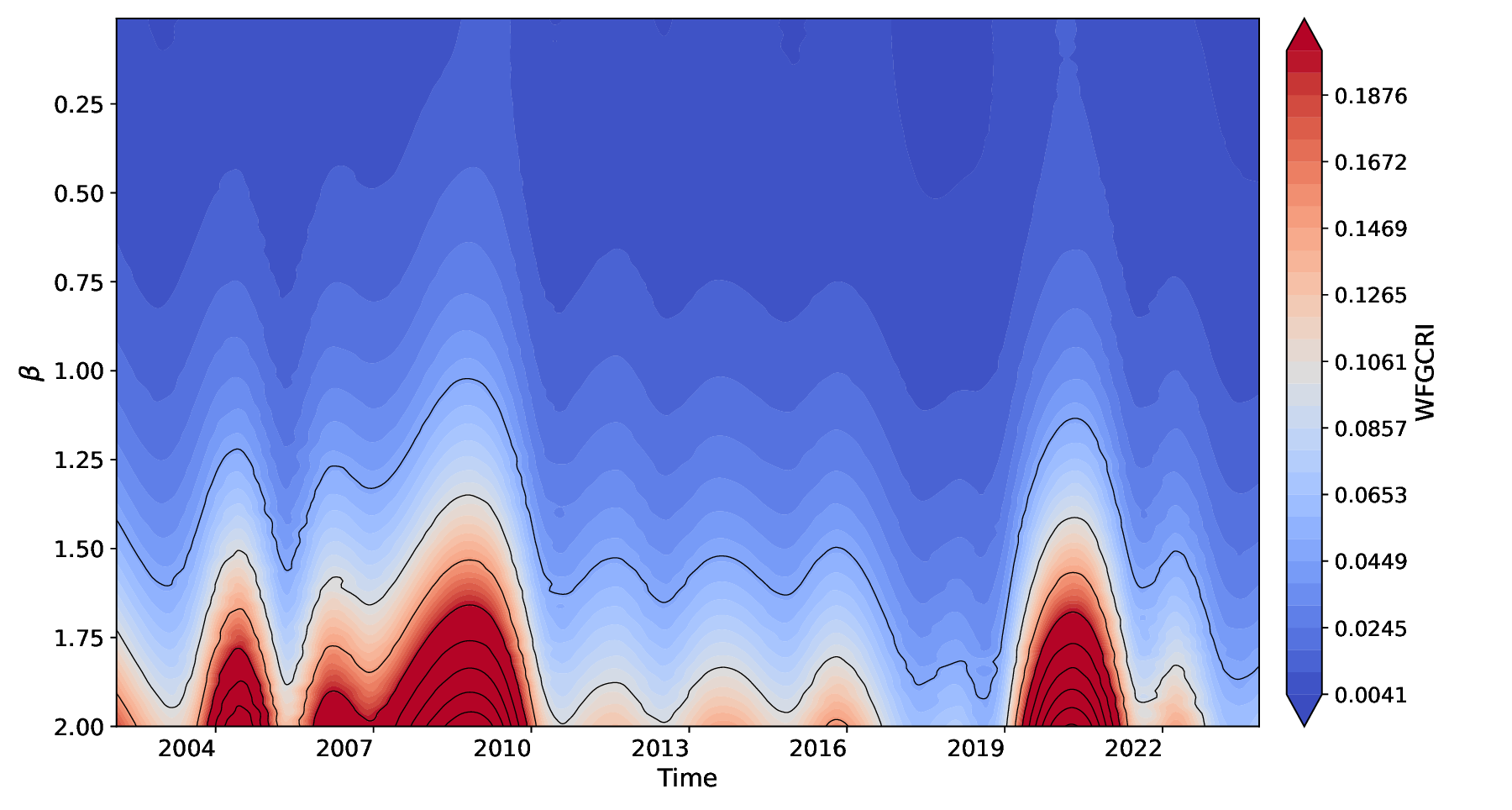}
		\centering{(b)}
	\end{minipage}
	\caption{Contour plot of WFGCRI under PHR model with $\alpha=5$ (left) and $\alpha=10$ (right) for $\beta \in (0.01,2]$ (with a step size of 0.01) and time. 
}\label{fig8}
\end{figure}
\subsection{Performance Comparison of WFGCRI and WCRI under Stochastic Market Dynamics}
In this subsection, we present a comparative analysis of the WFGCRI and WCRI measures under stochastic market conditions. We utilize the Nifty Auto and Nifty Energy indices from the Indian stock market over the period 1990–2022, obtained from Kaggle. Both datasets are transformed according to (\ref{5.1}), and the resulting series are illustrated in Figure \ref{fig9}. Here, we consider the Nifty Auto index as the actual data and the Nifty Energy index as the reference data.
The WFGCRI is computed using the estimation procedure proposed in~(\ref{4.3}) and is analyzed over the range $\beta \in (0,5]$, as illustrated in Figure~\ref{fig10}.
The results indicate that for $\beta \in (0.01,5]$, the WFGCRI remains close to zero and provides slightly larger values when $0 < \beta < 1$. Since WFGCRI coincides with WCRI at $\beta = 1$, it attains a comparatively low value in this region and is therefore less sensitive to identifying structural discrepancies or regime variations in the market data.
 In contrast, the WFGCRI, owing to its fractional weighting scheme, maintains sensitivity over a broad range of $\beta$ values and effectively captures deviations induced by market irregularities.
Thus, WFGCRI offers a more informative and discriminative inaccuracy measure compared to WCRI, especially in detecting subtle discrepancies that the classical measure overlooks. Furthermore, Figure \ref{fig10} also highlights the asymmetric behavior of the WFGCRI with respect to $\beta$.
\begin{figure}[H]
    \centering
    \includegraphics[width=5in, height=2.4in]{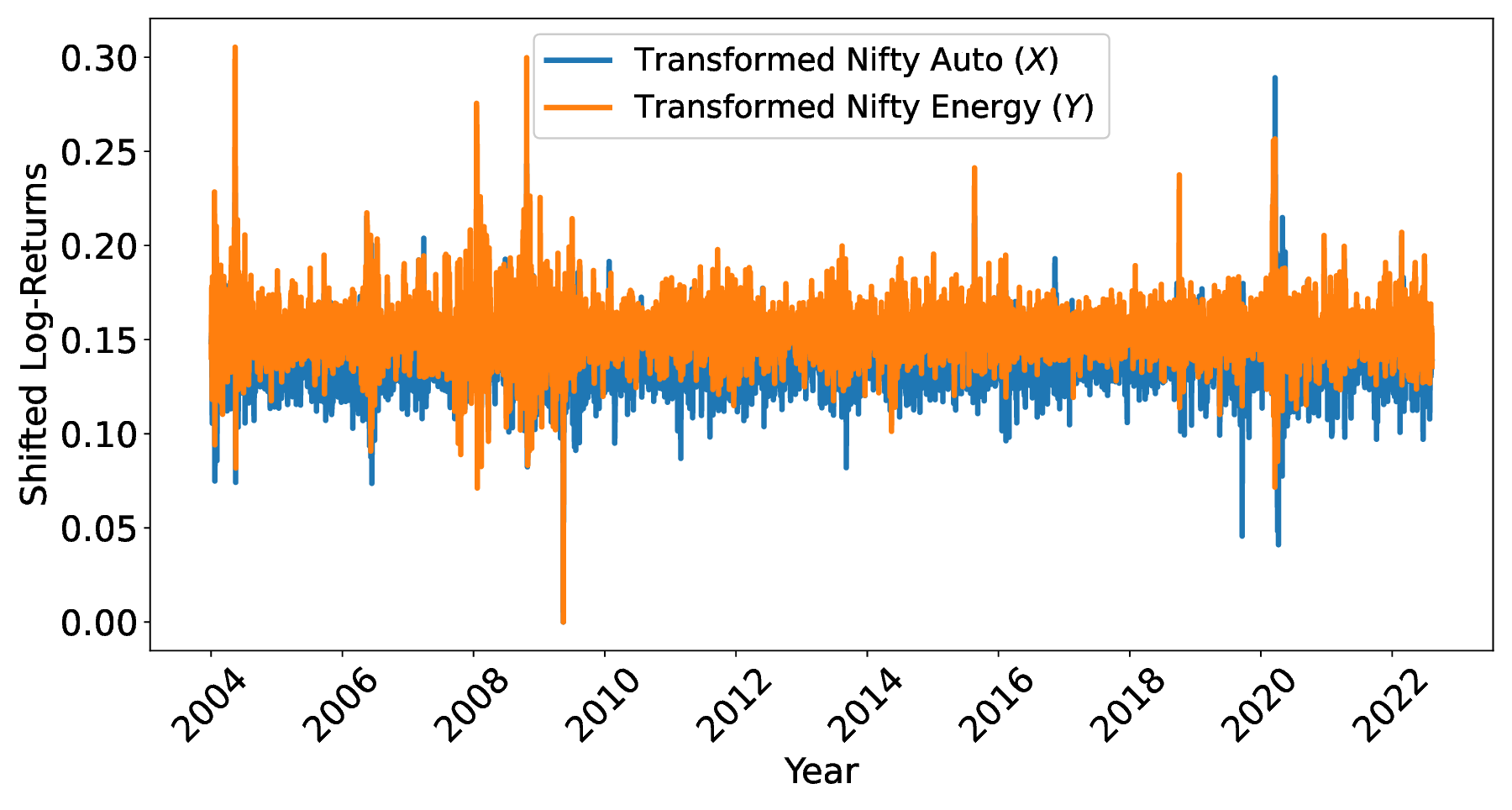}
    \caption{Transformed returns of Nifty Auto and Nifty Energy datasets.}\label{fig9}
\end{figure}
\begin{figure}[H]
	\centering
	\includegraphics[width=5in, height=2.4in]{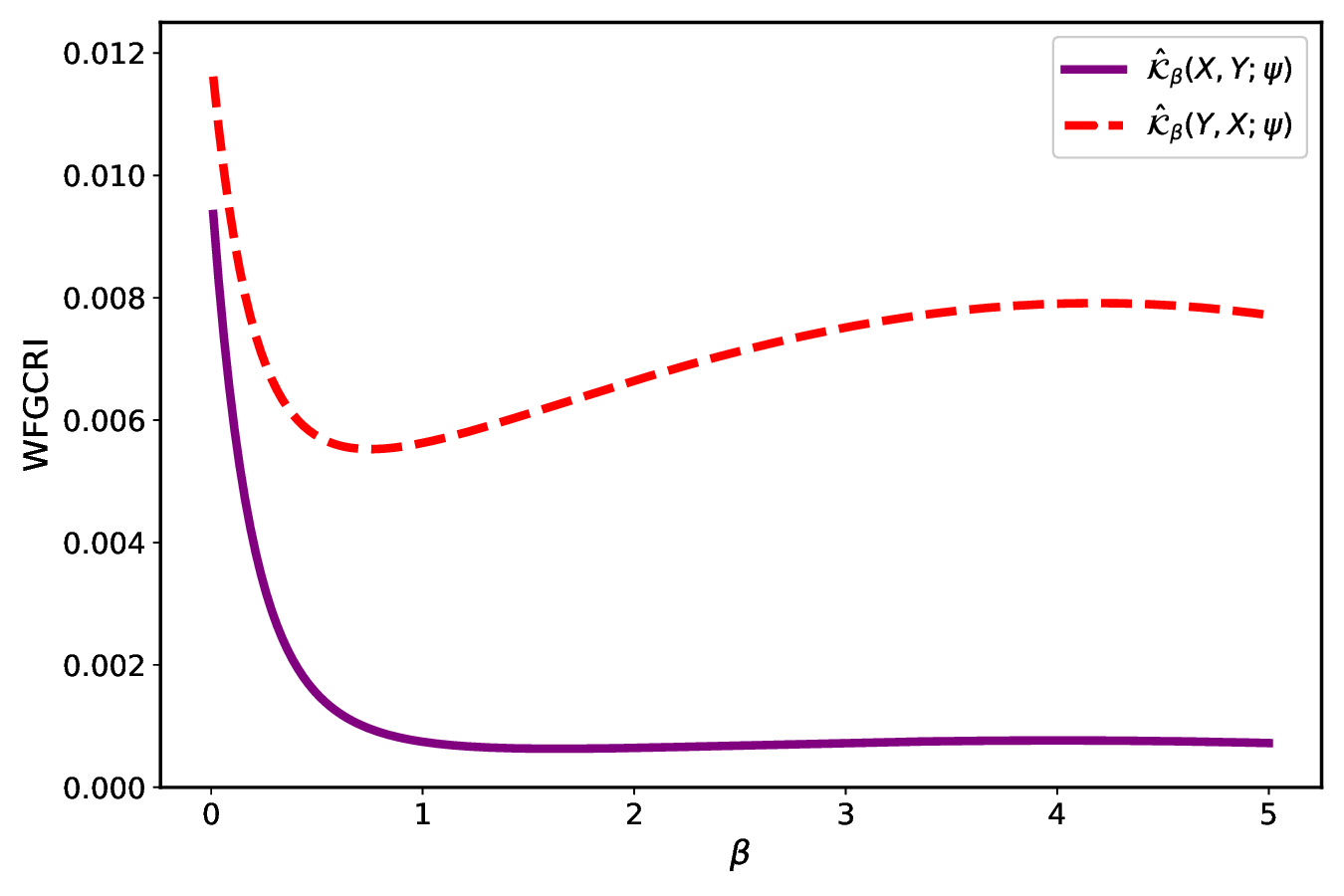}
	\caption{Plot of WFGCRI between Nifty Energy and Nifty Auto datasets.}\label{fig10}
\end{figure}
\section{Conclusion}
Motivated by recent advances in weighted inaccuracy measures, this paper developed and investigated a new weighted inaccuracy measure, referred to as WFGCRI. The behavior of the WFGCRI is studied under various stochastic orders, and several bounds are derived. Its relationship with WFGCRE is explored, providing deeper insight into its mathematical structure. Moreover, the effects of monotonic and affine transformations on the WFGCRI are examined, demonstrating its robustness under commonly encountered data transformations. The behavior of the WFGCRI is also analyzed under mixture hazard rate models.
To address time-varying uncertainty, a dynamic version of the WFGCRI was introduced and studied under PHR and PO models with illustrative examples.
A non-parametric estimation procedure for the WFGCRI was developed and assessed through extensive simulation studies. The results demonstrated that AB, RMSE, and the length of 95\% CI decreased as the sample size increased, thereby confirming the consistency and practical feasibility of the proposed estimator.
The usefulness of the WFGCRI was further illustrated through applications to chaotic dynamical systems, specifically the Ricker  and  Tent maps. In both cases, the WFGCRI effectively captured the underlying chaotic dynamics, underscoring its potential as a diagnostic tool for complex nonlinear systems.
Finally, the practical relevance of the WFGCRI under the PHR model is demonstrated using real financial time-series data, specifically the daily share prices of the Nifty 50 index. The empirical findings reveal that the WFGCRI under PHR attains higher values during periods of market instability and financial crises, thereby effectively capturing structural changes in the data. In addition, an empirical estimation procedure for the WFGCRI is proposed and evaluated through Monte Carlo simulation. Furthermore, applications to the Nifty Auto (true data) and Nifty Energy (reference data) indices show that the WFGCRI consistently outperforms the WCRI in measuring the discrepancy between the two series, demonstrating its superior sensitivity and effectiveness in real-world applications.

As future research directions, the proposed WFGCRI can be extended to multivariate and high-dimensional settings. Additionally, its applicability may be explored in other domains such as reliability engineering, survival analysis, and anomaly detection in complex systems.
\section*{Funding Sources}
This research did not receive any specific grant from funding agencies in the public, commercial, or not-for-profit sectors.
\section*{Declaration of AI}
During the preparation of this work, the authors used ChatGPT (OpenAI) to improve the clarity and language of the manuscript. The authors reviewed and edited the content as needed and take full responsibility for the content of the published article.
\section*{Conflict of Interest}
The authors declare that they have no conflict of interest concerning the publication of this manuscript.
\section*{Data Availability}
\footnotesize\begin{itemize}
	\item The Nifty 50 dataset:\\ \url{https://www.kaggle.com/datasets/adarshde/nifty-50-data-1990-2024}.
	
	\item	The Nifty Energy and Nifty Auto data sets:\\ \url{https://www.kaggle.com/datasets/debashis74017/stock-market-index-data-india-1990-2022}.
\end{itemize}

\end{document}